\theoremstyle{definition}
\newtheorem{definition}{Definition}[section]
\newtheorem{proposition}[definition]{Proposition}
\newtheorem{theorem}[definition]{Theorem}
\newtheorem{remark}[definition]{Remark}
\def\div{\mbox{div}\,}
\def\<{\mathop{<}}
\def\>{\mathop{>}}
\newcommand{\spt}{\mathrm{spt}\,}
\newcommand{\dist}{\mathrm{dist}\,}
\numberwithin{equation}{section}
\def\Xint#1{\mathchoice
{\XXint\displaystyle\textstyle{#1}}%
{\XXint\textstyle\scriptstyle{#1}}%
{\XXint\scriptstyle\scriptscriptstyle{#1}}%
{\XXint\scriptscriptstyle\scriptscriptstyle{#1}}%
\!\int}
\def\XXint#1#2#3{{\setbox0=\hbox{$#1{#2#3}{\int}$}
\vcenter{\hbox{$#2#3$}}\kern-.5\wd0}}
\def\dashint{\Xint-}
\title[Existence of volume preserving mean curvature flow]{Existence of weak solution for volume preserving mean curvature flow via phase field method}
\author[K. Takasao]{Keisuke Takasao \\ Graduate School of Mathematical Sciences
University of Tokyo \\ Komaba 3-8-1, Meguro
JP-153-8914 Tokyo
Japan}
\email{takasao@ms.u-tokyo.ac.jp}
\keywords{mean curvature flow, Allen-Cahn equation, phase field method}
\subjclass[2010]{Primary~35K93, Secondary~53C44}
\thanks{The author is grateful to Professor Yoshikazu Giga, Professor Yoshihiro Tonegawa, Professor Noriaki Yamazaki and Professor Tomoyuki Suzuki for numerous comments}
\date{}
\begin{document}
\maketitle
\begin{abstract}
We study the phase field method for the volume preserving mean curvature flow. Given an initial $C^1$ hypersurface we proved the existence of the weak solution for the volume preserving mean curvature flow via the reaction diffusion equation with a nonlocal term. We also show the monotonicity formula and the density upper bound for the reaction diffusion equation.
\end{abstract}
\section{Introduction}
Let $U_t \subset \mathbb{R}^d$ be a bounded open set and have a smooth boundary $M_t$ for $t\in [0,T)$. The family of hypersurfaces $\{ M _t \}_{t\in [0,T)}$ is called the volume preserving mean curvature flow if the velocity vector $v$ of $M _t$ is given by
\begin{equation}
v= h-\langle h\cdot \nu \rangle \nu \quad \text{on} \ M_t,
\label{mcf}
\end{equation}
where $h$ and $\nu$ are the mean curvature vector and the inner unit normal vector of $M_t$ respectively, and $\langle h \cdot \nu \rangle$ is given by
\[ \langle h \cdot \nu \rangle := \frac{1}{\mathcal{H}^{d-1} (M_t)}\int _{M_t} h \cdot \nu \, d\mathcal{H}^{d-1}.\]
Here $\mathcal{H}^{d-1}$ is the $(d-1)$-dimensional Hausdorff measure. By \eqref{mcf}, this flow has the volume preserving property, that is
\begin{equation}
\frac{d}{dt} \mathcal{L}^{d} (U_t) =-\int _{M_t} v\cdot \nu \, d\mathcal{H}^{d-1}=0,
\label{vp}
\end{equation}
where $\mathcal{L}^d$ is the $d$-dimensional Lebesgue measure. By \eqref{vp}, we obtain
\begin{equation}
\begin{split}
&\frac{d}{dt}\mathcal{H}^{d-1} (M_t) =- \int _{M_t} h\cdot v \, d\mathcal{H}^{d-1}= -\int _{M_t} (v+\langle h\cdot \nu \rangle \nu) \cdot v \, d\mathcal{H}^{d-1}\\
=&  -\int _{M_t} |v|^2 \, d\mathcal{H}^{d-1} -\langle h \cdot \nu \rangle \int _{M_t} \nu \cdot v \, d\mathcal{H}^{d-1} =-\int _{M_t} |v|^2 \, d\mathcal{H}^{d-1} .
\end{split}
\label{v2}
\end{equation}

The time global existence of the classical solution to \eqref{mcf} for convex initial data is proved by Gage \cite{gage} ($d=2$) and Huisken \cite{huisken1987} ($d \geq 2$). Escher and Simonett \cite{escher-simonett} proved the short time existence of the solution to \eqref{mcf} for smooth initial data, and they show that if $M_0$ is sufficiently close to a Euclidean sphere, then there exists a time global solution.
Li \cite{li} also proved that if the traceless second fundamental form of initial data is sufficiently small then there exists a time global solution. Recently, Mugnai, Seis and Spadaro \cite{mugnai-seis-spadaro} proved the existence of the global distributional solution for \eqref{mcf} by using a variational approach.

Next we mention the approximation of the volume preserving mean curvature flow via the phase field method. Let $\varepsilon\in (0,1)$ and $\Omega$ be the torus, that is $\Omega:=\mathbb{T}^d=(\mathbb{R}/\mathbb{Z})^{d}$. We also use $\Omega$ to a set $[0,1)^d \subset \mathbb{R}^d$. Rubinstein and Sternberg~\cite{RubinsteinSternberg} considered the following Allen-Cahn equation with a nonlocal term:
\begin{equation}
\left\{ 
\begin{array}{ll}
\varepsilon \varphi ^{\varepsilon} _t =\varepsilon \Delta \varphi ^{\varepsilon} -\dfrac{W '(\varphi ^{\varepsilon})}{\varepsilon }+ \lambda_{1} ^\varepsilon ,& (x,t)\in \Omega \times (0,\infty),  \\
\varphi ^{\varepsilon} (x,0) = \varphi _0 ^{\varepsilon} (x) ,  &x\in \Omega,
\end{array} \right.
\label{rs}
\end{equation}
where $\displaystyle W(s):=\frac{(1-s^2)^2}{2}$ and $\displaystyle\lambda_{1}(t) :=\dashint _{\Omega }\frac{W' (\varphi ^{\varepsilon})}{\varepsilon} \, dx =\frac{1}{\mathcal{L}^d(\Omega)} \int _{\Omega }\frac{W' (\varphi ^{\varepsilon})}{\varepsilon} \, dx$.

\eqref{rs} has the volume preserving property, that is
\begin{equation}
\frac{d}{dt} \int _{\Omega } \varphi ^{\varepsilon} \, dx= \int _{\Omega } \varphi _t ^{\varepsilon} \, dx=0.
\label{phi}
\end{equation}
By \eqref{phi} we obtain
\begin{equation}
\begin{split}
&\frac{d}{dt}\int _{\Omega}  \frac{\varepsilon |\nabla \varphi^\varepsilon | ^2}{2} +\frac{W (\varphi^\varepsilon )}{\varepsilon} \, dx  
=\int  _{\Omega} \Big( \varepsilon \nabla \varphi^\varepsilon \cdot \nabla \varphi _t ^\varepsilon +\frac{W' (\varphi^\varepsilon )}{\varepsilon}\varphi _t ^\varepsilon \Big) \, dx \\
=&  \int_{\Omega }  \Big( -\varepsilon \Delta \varphi^\varepsilon +\frac{W' (\varphi ^\varepsilon )}{\varepsilon} \Big)\varphi^\varepsilon _t \, dx=  \int_{\Omega }  (- \varepsilon \varphi _t ^\varepsilon +\lambda _{1}^\varepsilon ) \varphi^\varepsilon _t \, dx \\
=&  - \int_{\Omega }  \varepsilon( \varphi _t ^\varepsilon)^2 \, dx +\lambda _{1}^\varepsilon \int _{\Omega } \varphi ^{\varepsilon} _t \, dx =- \int_{\Omega }  \varepsilon( \varphi _t ^\varepsilon)^2 \, dx.
\end{split}
\label{deri}
\end{equation}
By using the following approximate expressions (see \cite{Ilmanen})
\begin{equation}
\mathcal{H}^{d-1} (M_t) \approx \frac{1}{\sigma}\int _{\Omega}  \frac{\varepsilon |\nabla \varphi^\varepsilon | ^2}{2} +\frac{W (\varphi^\varepsilon )}{\varepsilon} \, dx   \quad \text{and} \quad  \int _{\Omega} |v|^2 \, d\mathcal{H}^{d-1} \approx \frac{1}{\sigma} \int _{\Omega} \varepsilon (\varphi _t ^\varepsilon)^2 \, dx,
\label{approx1.7}
\end{equation}
\eqref{deri} corresponds to \eqref{v2}. Here $\sigma := \int_{-1} ^{1} \sqrt{2W ( s )} \, ds$. Bronsard and Stoth \cite{bronsard-stoth} studied the singular limit of radially symmetric solutions of \eqref{rs}. Chen, Hilhorst and Logak \cite{chen-hilhorst-logak} proved that the zero level set of the solution of \eqref{rs} converges to the classical solution of the volume preserving mean curvature flow under the suitable conditions. 

Recently, Brassel and Bretin~\cite{brassel-bretin} studied following equation:
\begin{equation}
\left\{ 
\begin{array}{ll}
\varepsilon \varphi ^{\varepsilon}_t =\varepsilon \Delta \varphi ^{\varepsilon} -\dfrac{W' (\varphi ^{\varepsilon})}{\varepsilon }+\lambda_{2} ^\varepsilon \sqrt{2W (\varphi ^{\varepsilon})}  ,& (x,t)\in \Omega \times (0,\infty),  \\
\varphi ^{\varepsilon} (x,0) = \varphi _0 ^{\varepsilon} (x) ,  &x\in \Omega,
\end{array} \right.
\label{bb}
\end{equation}
where 
\[ \lambda_{2}=\lambda_{2}(t) := \frac { \int _{\Omega }W' (\varphi ^{\varepsilon})/\varepsilon \, dx }{ \int _{\Omega } \sqrt{2W (\varphi ^{\varepsilon})}  \, dx } .\]
The solution of \eqref{bb} has also the property \eqref{phi}. Alfaro and Alifrangis \cite{alfaro-alifrangis} showed the convergence of \eqref{bb} to the classical solution for \eqref{mcf}. \cite{brassel-bretin} showed that the numerical experiments via \eqref{bb} is better than \eqref{rs}. But \eqref{bb} has not the properties such as \eqref{deri}.

Whether the solution for \eqref{rs} or \eqref{bb} converges to the time global weak solution of the volume preserving mean curvature flow or not is an open problem, due to the difficulty of estimates of the Lagrange multipliers (see Remark \ref{difficulty}).

In this paper, we consider the following reaction diffusion equation studied by Golovaty~\cite{golovaty}:
\begin{equation}
\left\{ 
\begin{array}{ll}
\varepsilon \varphi ^{\varepsilon} _t =\varepsilon \Delta \varphi ^{\varepsilon} -\dfrac{W' (\varphi ^{\varepsilon})}{\varepsilon }+\lambda ^\varepsilon \sqrt{2W(\varphi ^\varepsilon)} ,& (x,t)\in \Omega \times (0,\infty),  \\
\varphi ^{\varepsilon} (x,0) = \varphi _0 ^{\varepsilon} (x) ,  &x\in \Omega,
\end{array} \right.
\label{ac}
\end{equation}
where
\begin{equation*}
\begin{split}
\lambda^\varepsilon =\lambda^\varepsilon (t) :=\frac{- \int_{\Omega} \sqrt{2W(\varphi ^\varepsilon)} (\varepsilon \Delta \varphi ^\varepsilon - W'(\varphi ^\varepsilon ) / \varepsilon  ) \, dx }{ 2\int_{\Omega}  W(\varphi^\varepsilon) \, dx }.
\end{split}
\end{equation*}
Note that by the integration by parts, we have
\[ \lambda^\varepsilon =\frac{-2 \int_{\Omega} \varphi^\varepsilon ( \varepsilon |\nabla \varphi^\varepsilon | ^2/2 +W(\varphi ^\varepsilon )/\varepsilon ) \, dx }{ \int_{\Omega} W(\varphi^\varepsilon) \, dx }. \]
\cite{golovaty} studied the asymptotic behavior of the radially symmetric solutions for \eqref{ac}. Define $k(s):=\int_{0}^{s} \sqrt{2W(\tau)} \,d\tau =s-\frac{1}{3} s^3$. \eqref{ac} has a property similar to \eqref{phi}, that is,
\begin{equation}
\frac{d}{dt}\int_{\Omega} k(\varphi ^\varepsilon) \, dx=\int_{\Omega}\varphi ^\varepsilon _t  \sqrt{2W(\varphi ^\varepsilon)}  \, dx =0.
\label{h}
\end{equation}
We compute that
\begin{equation}
\begin{split}
&\frac{d}{dt}\int _{\Omega}  \frac{\varepsilon |\nabla \varphi^\varepsilon | ^2}{2} +\frac{W (\varphi^\varepsilon )}{\varepsilon} \, dx  
=\int  _{\Omega} \Big( \varepsilon \nabla \varphi^\varepsilon \cdot \nabla \varphi _t ^\varepsilon +\frac{W' (\varphi^\varepsilon )}{\varepsilon}\varphi _t ^\varepsilon \Big) \, dx \\
=&  \int_{\Omega }  \Big( - \varepsilon \Delta \varphi^\varepsilon +\frac{W' (\varphi ^\varepsilon )}{\varepsilon } \Big)\varphi^\varepsilon _t \, dx=  \int_{\Omega }  ( - \varepsilon \varphi _t ^\varepsilon +\lambda^\varepsilon \sqrt{2W(\varphi ^\varepsilon)} ) \varphi^\varepsilon _t \, dx \\
=&  - \int_{\Omega }  \varepsilon( \varphi _t ^\varepsilon)^2 \, dx + \lambda ^\varepsilon \int _{\Omega } \varphi ^{\varepsilon} _t \sqrt{2W(\varphi ^\varepsilon )} \, dx =- \int_{\Omega }  \varepsilon( \varphi _t ^\varepsilon)^2 \, dx,
\end{split}
\label{deri2}
\end{equation}
where \eqref{h} is used. By using \eqref{approx1.7}, \eqref{deri2} also corresponds to \eqref{v2}. 

The main result in this paper is the time global existence of the weak solution for \eqref{mcf} by using \eqref{ac} (see Theorem \ref{mainresults}). The weak solution is called $L^2$-flow defined by Mugnai and R{\"o}ger~\cite{mugnai-roger2008} and the definition is similar to Brakke's mean curvature flow \cite{brakke}. In Proposition \ref{keyprop} and Proposition \ref{proph}, we obtain the $L^2$ estimates for $\lambda ^\varepsilon$ and the generalized mean curvature. Those are the key estimates for the existence theorem.

The monotonicity formula for the mean curvature flow is proved by Huisken~\cite{huisken1990}. Ilmanen proved the $\varepsilon$-version of Huisken's monotonicity formula of the Allen-Cahn equation for the mean curvature flow~\cite{Ilmanen}. In this paper we show the monotonicity formula for \eqref{ac} (Proposition \ref{propmono}). We also obtain the upper density bounds for \eqref{ac} by using the monotonicity formula (Proposition \ref{propdens}).
 
The organization of the paper is as follows. In Section 2 we set out the basic definitions and explain the main results. In Section 3 we show some energy estimates for \eqref{ac} and the $L^2$ estimates of the Lagrange multiplier $\lambda^\varepsilon$. We also prove the monotonicity formula and density upper bounds for \eqref{ac}. In
Section 4 we prove the main results.
\section{Preliminaries and main results}
We recall some notations from geometric measure theory and refer to \cite{allard,brakke,evansgariepy,federer,simon} for more details. For $r>0$ and $a\in \mathbb{R}^d$ we define $B_r (a):=\{ x\in \mathbb{R}^d \, | \, |x-a|<r \}$. Set $\omega _d :=\mathcal{L}^d (B_1 (0))$. We denote the space of bounded variation functions on $\mathbb{R}^d$ as $BV (\mathbb{R}^d)$. We write the characteristic function of a set $A \subset \mathbb{R}^d$ as $\chi _{A}$. For a set $A\subset \mathbb{R}^d$ with finite perimeter, we denote the total variation measure of the distributional derivative $\nabla \chi _{A}$ by $\| \nabla \chi _{A} \|$. For measures $\mu$ and $\tilde\mu$ such that $\tilde\mu$ is absolutely continuous with respect to $\mu$, we write the Radon-Nikodym derivative by $\frac{d\tilde\mu}{d\mu}$. For $a=(a_1 ,a_2,\dots ,a_d)$, $b=(b_1 ,b_2,\dots ,b_d) \in \mathbb{R}^d$ we denote $a\otimes b :=(a_i b_j)$. For $A=(a_{ij}),B=(b_{ij}) \in \mathbb{R}^{d\times d}$, we define
\begin{equation*}
A:B:=\sum_{i,j=1}^d a_{ij}b_{ij}.
\end{equation*}
Let $G_k (\mathbb{R}^d)$ be the Grassman manifold of unoriented $k$-dimensional subspaces in $\mathbb{R}^d$. Let $S\in G_k (\mathbb{R}^d)$. We also use $S$ to denote the $d$ by $d$ matrix representing the orthogonal projection $\mathbb{R}^d\to S$. Especially, if $k=d-1$ then the projection for $S\in G_{d-1}(\mathbb{R}^d)$ is given by $S=I-\nu\otimes \nu$, where $I$ is the identity matrix and $\nu$ is the unit normal vector of $S$.

We call a Radon measure on $\mathbb{R}^d\times G_k (\mathbb{R}^d)$ a general $k$-varifold in $\mathbb{R}^d$. We denote the set of all general $k$-varifolds by $\mathbf{V}_k(\mathbb{R}^d)$. Let $V \in \mathbf{V}_k (\mathbb{R}^d)$. We define a mass measure of $V$ by
\[ \| V \| (A) := V( (\mathbb{R} ^d \cap A ) \times G_k (\mathbb{R}^d) )  \]
for any Borel set $A \subset \mathbb{R}^d$.
We also denote
\[ \| V \| (\phi ) :=  \int _{ \mathbb{R} ^d  \times G_k (\mathbb{R}^d) } \phi (x) \, dV(x,S) \quad \text{for} \quad \phi \in C_c (\mathbb{R}^d). \]
The first variation $\delta V:C_c ^1 (\mathbb{R}^d ;\mathbb{R}^d)\to \mathbb{R}$ of $V\in \mathbf{V}_k(\mathbb{R}^d)$ is defined by
\[ \delta V(g):= \int _{\mathbb{R} ^d \times G_k (\mathbb{R} ^d)} \nabla g(x) : S \, dV(x,S) \quad \text{for} \quad g \in C_c ^1 (\mathbb{R}^d ;\mathbb{R}^d).\]
We define a total variation $\| \delta V \|$ to be the largest Borel regular measure on $\mathbb{R}^d$ determined by
\[ \|\delta V \| (G) := \sup \{ \delta V(g)  \, | \, g\in C_c ^1 (G;\mathbb{R}^d), \ |g|\leq 1 \} \]
for any open set $G\subset \mathbb{R}^d$. If $\|\delta V \|$ is locally bounded and absolutely continuous with respect to $\|V\|$, then by the Radon-Nikodym theorem, there exists a $\|V\|$-measurable function 
$ h(x)$ with values in $\mathbb{R}^d$ such that
\[  \delta V  (g) = - \int _{\mathbb{R} ^d } h(x)  \cdot g(x)\, d\|V\|(x)  \quad \text{for} \quad g\in C_c (\mathbb{R}^d;\mathbb{R}^d). \]
We call $h$ the generalized mean curvature vector of $V$. 

We call a Radon measure $\mu $ $k$-rectifiable if $\mu $ is represented by $\mu = \theta \mathcal{H} ^k \lfloor M$, that is, $\mu (\phi):=\int _{\mathbb{R}^d} \phi \, d\mu = \int _M \phi \theta \, d\mathcal{H}^k$ for any $\phi \in C_c (\mathbb{R}^d)$. Here $M$ is countably $k$-rectifiable and $\mathcal{H}^k$-measurable, and $\theta \in L^1 _{loc} (\mathcal{H}^k \lfloor M)$ is positive valued $\mathcal{H}^k$-a.e. on $M$. Moreover if  $\theta$ is positive and integer-valued $\mathcal{H}^k$-a.e. on $M$ then we call $\mu$ $k$-integral. 
For a $k$-rectifiable Radon measure $\mu= \theta \mathcal{H} ^k \lfloor M$ we define a unique $k$-varifold $V$ by
\begin{equation*}
\int _{\mathbb{R} ^d \times G_k (\mathbb{R} ^d)} \phi (x,S) \, dV(x,S)
:= \int _{\mathbb{R}^d} \phi (x,T_x M ) \theta(x) \, d\mathcal{H}^k (x) \qquad \text{for} \ \phi \in C_c(\mathbb{R}^d \times G_k (\mathbb{R}^d)), 
\end{equation*}
where $T_x M$ is the approximate tangent space of $M$ at $x$. Note that $T_x M$ exists $\mathcal{H}^k$-a.e. on $M$ in this assumption, and $\mu =\|V\|$ under this correspondence. 
The following definition is similar to the formulation of Brakke's mean curvature flow~\cite{brakke}:
\begin{definition}[$L^2$-flow~\cite{mugnai-roger2008}]
Let $T>0$ and $\{ \mu _t \}_{t\in (0,T)}$ be a family of Radon measures on $\mathbb{R}^d$. Set $d\mu :=d\mu_t dt$. We call $\{ \mu _t \}_{t\in (0,T)}$ $L^2$-flow if the following hold:
\begin{enumerate}
\item $\mu _t$ is $(d-1)$-rectifiable and has a generalized mean curvature vector $h \in L^2 (\mu _t;\mathbb{R}^d)$ a.e. $t\in (0,T)$, 
\item and there exist $C>0$ and a vector field $v\in L^2 (\mu ,\mathbb{R}^d)$ such that
\begin{equation}
v(x,t)\perp T_x \mu _t \quad \text{for} \ \mu \text{-a.e.} \ (x,t) \in \mathbb{R}^d\times (0,T)
\label{velo1}
\end{equation}
and
\begin{equation}
\Big| \int _0 ^T \int _{\mathbb{R}^d} (\eta _t + \nabla \eta \cdot v ) \, d\mu_t dt \Big| \leq C \| \eta \|_{C^0 (\mathbb{R}^d\times (0,T))}
\label{velo2}
\end{equation}
for any $\eta \in C_c ^1 (\mathbb{R}^d\times (0,T))$. Here $T_x \mu _t $ is the approximate tangent space of $\mu _t$ at $x$.
\end{enumerate}
Moreover $v\in L^2 (\mu ,\mathbb{R}^d)$ with \eqref{velo1} and \eqref{velo2} is called a generalized velocity vector.
\end{definition}
Set $q^\varepsilon (r) := \tanh (\frac{r}{\varepsilon})$ for $r\in \mathbb{R}$ and $\varepsilon >0$. Then the following hold:
\begin{enumerate}
\item $q^{\varepsilon}$ is a solution for
\begin{equation}
\frac{\varepsilon ( q^{\varepsilon} _r)^2 }{2} = \frac{W (q ^{\varepsilon})}{\varepsilon} \qquad \text{and} \qquad 
q^{\varepsilon} _{rr}= \frac{W' (q ^{\varepsilon})}{\varepsilon ^2}
\label{q}
\end{equation}
with $q^{\varepsilon}(0)=0, \ q^{\varepsilon}(\pm \infty)=\pm 1$ and $ q^{\varepsilon}_r (r)  >0$ for any $r\in \mathbb{R}$. 
\item By \eqref{q} we have
\begin{equation*}
\begin{split}
&\int_{\mathbb{R}}  \frac{\varepsilon ( q ^{\varepsilon}_r )^2}{2} + \frac{W (q^{\varepsilon} )}{\varepsilon} \,dr = 
\int_{\mathbb{R}} \sqrt{2W (q^{\varepsilon} )} q ^{\varepsilon}_r  \,dr \\
=& \int_{-1} ^{1} \sqrt{2W ( s )} \, ds=\sigma.
\end{split}
\end{equation*}
\end{enumerate}

Let $U_0 \subset \subset (0,1)^d$ be a bounded open set and we denote $M _0 :=\partial U_0$.
Throughout this paper, we assume the following:
\begin{enumerate}
\item There exists $D_0>0$ such that
\begin{equation}
\sup_{x\in \mathbb{R}^d, R>0}\frac{\mathcal{H}^{d-1} (M _0  \cap B_R (x)) }{\omega _{d-1}R^{d-1}} \leq D_0 \quad \text{(Density upper bounds)}.
\label{initialdata1}
\end{equation}
\item There exists a family of open sets $\{ U _0 ^i \}_{i=1} ^\infty$ such that $U _0 ^i$ have a  $C^3$ boundary $M _0 ^i$ such that $(U _0 ,M _0)$ be approximated strongly by $\{ (U _0 ^i ,M _0 ^i) \} _{i=1} ^\infty$, that is
\begin{equation}
\lim _{i\to \infty} \mathcal{L}^d (U _0 \triangle U_0 ^i) =0 \quad \text{and} \quad
\lim _{i \to \infty} \| \nabla \chi _{U ^i _0} \| = \|\nabla \chi _{U _0} \| \ \ \text{as measures.}
\label{initialdata2}
\end{equation}
\end{enumerate}
\begin{remark}
If $M _0$ is $C^1$, then \eqref{initialdata1} and \eqref{initialdata2} are satisfied.
\end{remark}
We extend $U_0 ^i$ and $M_0 ^i$ periodically to $\mathbb{R}^d$. Let $\{ \varepsilon _i \}_{i=1}^\infty$ be a sequence with $\varepsilon _i \downarrow 0$ as $i\to \infty$. For $U _0^i$ we define
\[
   r_{\varepsilon_i}(x)= \begin{cases}
    \dist(x,M_0 ^i), & x\in U _0^i ,\\
    -\dist(x,M_0 ^i), & x\notin U _0 ^i.
  \end{cases}
\]
We remark that $|\nabla r_{\varepsilon_i} |\leq 1$ a.e. $x\in \mathbb{R}^d$ and $r_{\varepsilon_i}$ is smooth near $M _0 ^i$. Let $\overline{r_{\varepsilon_i}}$ be a smoothing of $r_{\varepsilon_i}$ with $|\nabla\overline{ r_{\varepsilon_i}} |\leq 1$ and $|\nabla ^2 \overline{ r_{\varepsilon_i}} |\leq \varepsilon _i ^{-1}$ in $\mathbb{R}^d$, and $\overline{r_{\varepsilon_i}} =r_{\varepsilon_i}$ near $M _0 ^i$.

Define
\begin{equation}
\varphi ^{\varepsilon_i }_0(x):=q^{\varepsilon_i } (\overline{r_{\varepsilon_i}} (x)), \quad i\geq 1.
\label{initial}
\end{equation}
We remark that $\varphi ^{\varepsilon_i }_0$ is a periodic function with period $\Omega$. Then there exists a global solution $\varphi ^{\varepsilon_i }$ for \eqref{ac} with the initial data $\varphi ^{\varepsilon_i }_0$ (see \cite{golovaty}). We remark that $\int _\Omega k(\pm 1)\, dx= \pm \frac{2}{3}\mathcal{L}^d(\Omega) =\pm\frac{2}{3}$. So we may assume that there exists $\omega=\omega (U_0) >0$ such that
\begin{equation}
\Big|\int _\Omega k(\varphi ^{\varepsilon_i} _0)\, dx\Big| \leq \frac{2}{3} -\omega, \quad i\geq 1. 
\label{omega}
\end{equation}
Note that by \eqref{h} we have
\begin{equation}
\Big|\int _\Omega k(\varphi ^{\varepsilon_i} (x,t) )\, dx\Big|=\Big|\int _\Omega k(\varphi ^{\varepsilon_i} _0)\, dx\Big|\leq \frac{2}{3}  -\omega, \quad i\geq 1 , \ t\geq 0. 
\label{omega2}
\end{equation}
We remark that $W'(s)=\sqrt{2W(s)}=0$ if $s=\pm1$. Hence by the maximal principle we have
\begin{proposition}[\cite{golovaty}]
\begin{equation}
\sup_{(x,t)\in \Omega \times [0,\infty)} |\varphi ^{\varepsilon_i} (x,t)| \leq 1,\quad i\geq 1.
\label{sup1}
\end{equation}
\end{proposition}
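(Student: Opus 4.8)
The plan is to establish the two-sided bound by a comparison argument, exploiting that the constants $\varphi\equiv 1$ and $\varphi\equiv -1$ are stationary solutions of \eqref{ac}. Since $W'(\pm1)=\sqrt{2W(\pm1)}=0$, the entire right-hand side of \eqref{ac} vanishes at $\varphi=\pm1$, so each constant $\pm1$ solves the equation; moreover the initial datum satisfies $|\varphi^{\varepsilon_i}_0|=|q^{\varepsilon_i}(\overline{r_{\varepsilon_i}})|<1$ by \eqref{initial}, because $q^{\varepsilon_i}=\tanh(\cdot/\varepsilon_i)\in(-1,1)$. I would therefore expect the solution to remain trapped between these two equilibria, and I would prove it by showing $\varphi^{\varepsilon_i}-1\le 0$ and $-1-\varphi^{\varepsilon_i}\le 0$.

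First I would reduce to a local problem on a fixed finite time interval. Fix $T>0$ and recall that \eqref{ac} admits a global smooth solution (see \cite{golovaty}); hence $\varphi^{\varepsilon_i}$ is continuous, and therefore bounded on $\overline{\Omega}\times[0,T]$, say $|\varphi^{\varepsilon_i}|\le K$. I also need $\lambda^{\varepsilon_i}$ bounded on $[0,T]$, and this is where \eqref{omega2} enters: if $\int_\Omega W(\varphi^{\varepsilon_i}(\cdot,t))\,dx$ vanished at some $t$, then $\varphi^{\varepsilon_i}(\cdot,t)\equiv 1$ or $\equiv-1$ on the connected torus $\Omega$, which is excluded by \eqref{omega2}; thus $t\mapsto\int_\Omega W(\varphi^{\varepsilon_i})\,dx$ is continuous and strictly positive on $[0,T]$, so the defining formula for $\lambda^{\varepsilon_i}$ gives $\sup_{[0,T]}|\lambda^{\varepsilon_i}|=:L<\infty$. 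On $[0,T]$ the equation \eqref{ac} is then a semilinear heat equation with bounded coefficients for which $\pm1$ are solutions.

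For the upper bound I set $w:=\varphi^{\varepsilon_i}-1$ and work on the open set $\{w>0\}$. There $\sqrt{2W(\varphi^{\varepsilon_i})}=(\varphi^{\varepsilon_i})^2-1$, and \eqref{ac} can be rewritten as
\[ \varepsilon_i w_t=\varepsilon_i\Delta w+\big((\varphi^{\varepsilon_i})^2-1\big)\Big(\lambda^{\varepsilon_i}-\tfrac{2\varphi^{\varepsilon_i}}{\varepsilon_i}\Big). \]
On $\{w>0\}$ one has $\varphi^{\varepsilon_i}>1$, so $2\varphi^{\varepsilon_i}/\varepsilon_i>0$ and $(\varphi^{\varepsilon_i})^2-1=w(\varphi^{\varepsilon_i}+1)\le(K+1)w$; a short case distinction on the sign of $\lambda^{\varepsilon_i}-2\varphi^{\varepsilon_i}/\varepsilon_i$ then yields
\[ \big((\varphi^{\varepsilon_i})^2-1\big)\Big(\lambda^{\varepsilon_i}-\tfrac{2\varphi^{\varepsilon_i}}{\varepsilon_i}\Big)\le L(K+1)\,w \quad\text{on }\{w>0\}. \]
Hence $w_t\le\Delta w+Cw$ there, with $C:=L(K+1)/\varepsilon_i$. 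Since $w<0$ at $t=0$, the weak parabolic maximum principle applied to $e^{-Ct}w$ (with the standard $-\delta t$ perturbation to rule out an interior positive maximum) forces $w\le0$ on $\Omega\times[0,T]$, i.e.\ $\varphi^{\varepsilon_i}\le1$. As $T>0$ is arbitrary, this gives the global bound.

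The lower bound is symmetric: setting $\hat w:=-1-\varphi^{\varepsilon_i}$ and working on $\{\hat w>0\}$, where again $\sqrt{2W(\varphi^{\varepsilon_i})}=(\varphi^{\varepsilon_i})^2-1$, the same factorization and estimate give $\hat w_t\le\Delta\hat w+C\hat w$ and hence $\varphi^{\varepsilon_i}\ge-1$. The only genuinely delicate points are the two preliminary facts used to localize the problem: the positivity of $\int_\Omega W(\varphi^{\varepsilon_i})\,dx$, which keeps $\lambda^{\varepsilon_i}$ finite and is the single place where the nonlocal structure interacts with \eqref{omega2}, and the control of the reaction term on $\{|\varphi^{\varepsilon_i}|>1\}$ despite the a priori unknown sign of $\lambda^{\varepsilon_i}$. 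Both are resolved once the solution is known to be bounded on compact time intervals, so I expect this localization to be the main obstacle rather than the maximum principle itself.
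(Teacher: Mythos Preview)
Your proof is correct and follows the same idea the paper indicates: the paper's entire argument is the sentence ``We remark that $W'(s)=\sqrt{2W(s)}=0$ if $s=\pm1$. Hence by the maximal principle'', i.e.\ $\pm1$ are stationary solutions of \eqref{ac} and the solution stays trapped. You have simply written out the comparison argument in full, including the two points the paper suppresses --- boundedness of $\lambda^{\varepsilon_i}$ on finite time intervals (via \eqref{omega2} to rule out $\varphi\equiv\pm1$) and the linearization of the reaction term on $\{|\varphi|>1\}$ that reduces to $w_t\le\Delta w+Cw$.
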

We denote $\varphi ^{\varepsilon} := \varphi ^{\varepsilon_i}$ and extend $\varphi ^{\varepsilon}$ periodically to $\mathbb{R}^d$. We define a Radon measure $\mu _t ^{\varepsilon}$ by
\begin{equation*}
\mu _t ^{\varepsilon}(\phi) := \frac{1}{\sigma}\int _{\mathbb{R}^d} \phi \Big( \frac{\varepsilon |\nabla \varphi ^{\varepsilon}|^2}{2} + \frac{W (\varphi ^{\varepsilon} )}{\varepsilon} \Big) dx
\end{equation*}
for any $\phi \in C_c (\mathbb{R}^d)$. Moreover we define a Radon measure $\mu ^{\varepsilon}$ by
\begin{equation*}
\mu ^{\varepsilon}(\psi) := \frac{1}{\sigma}\int _{[0,\infty)} \int _{\mathbb{R}^d} \psi \Big( \frac{\varepsilon |\nabla \varphi ^{\varepsilon}|^2}{2} + \frac{W (\varphi ^{\varepsilon} )}{\varepsilon} \Big) dxdt
\end{equation*}
for any $\psi \in C_c (\mathbb{R}^d\times[0,\infty))$. By the definition of $\varphi _0 ^{\varepsilon} $ we obtain the following:
\begin{proposition}[Proposition 1.4 of \cite{Ilmanen}]\label{prop1} Let $\varphi _0 ^{\varepsilon} $ satisfy \eqref{initial}. Then the following hold:
\begin{enumerate}
\item There exists $D_1 =D_1 (D_0)>0$ such that for any $\varepsilon >0$, we have
\begin{equation}
\sup_{x\in \mathbb{R}^d,R>0} \Big\{ \mu_0 ^{\varepsilon} ( B_R (x)) ,\frac{\mu_0 ^{\varepsilon} ( B_R (x)) }{\omega _{d-1}R^{d-1}} \Big\}\leq D_1.
\label{bound}
\end{equation}
\item $\lim _{\varepsilon \to 0}\mu _0 ^{\varepsilon} = \mathcal{H}^{d-1} \lfloor M_0$ as Radon measures, that is  $\lim _{\varepsilon \to 0}\int _{\mathbb{R}^d} \phi \, d\mu _0 ^{\varepsilon} = \int _{M_0} \phi \, d\mathcal{H}^{d-1}$ for any $\phi \in C_c (\mathbb{R}^d)$.
\item $ \lim _{\varepsilon \to 0}\varphi _0 ^{\varepsilon } =2\chi _{U_0} -1$ in $BV _{loc}$.
\item For any $\varepsilon >0$, we have
\begin{equation}
\frac{\varepsilon |\nabla \varphi ^{\varepsilon}_0| ^2}{2}\leq \frac{W(\varphi ^{\varepsilon }_0)}{\varepsilon } \quad \text{on} \quad \Omega.
\label{negativity}
\end{equation}
\end{enumerate}
\end{proposition}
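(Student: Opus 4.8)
The plan is to read off all four assertions from the explicit one-dimensional structure $\varphi_0^\varepsilon = q^\varepsilon(\overline{r_\varepsilon})$, where $q^\varepsilon(r)=\tanh(r/\varepsilon)$ is the optimal profile obeying the equipartition and ODE relations \eqref{q} and $\overline{r_\varepsilon}$ is the smoothed signed distance to $M_0^i$. Assertion $(4)$ is immediate: since $\nabla\varphi_0^\varepsilon = q_r^\varepsilon(\overline{r_\varepsilon})\,\nabla\overline{r_\varepsilon}$ and $|\nabla\overline{r_\varepsilon}|\le 1$, the first identity in \eqref{q} gives $\frac{\varepsilon|\nabla\varphi_0^\varepsilon|^2}{2}=\frac{\varepsilon (q_r^\varepsilon)^2}{2}|\nabla\overline{r_\varepsilon}|^2\le \frac{\varepsilon (q_r^\varepsilon)^2}{2}=\frac{W(q^\varepsilon)}{\varepsilon}=\frac{W(\varphi_0^\varepsilon)}{\varepsilon}$. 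In particular the energy density $e_\varepsilon:=\frac{\varepsilon|\nabla\varphi_0^\varepsilon|^2}{2}+\frac{W(\varphi_0^\varepsilon)}{\varepsilon}$ satisfies $e_\varepsilon\le \frac{2W(q^\varepsilon(\overline{r_\varepsilon}))}{\varepsilon}$, with equality wherever $\overline{r_\varepsilon}=r_\varepsilon$, and this is the starting point for $(1)$ and $(2)$.

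For the density bound $(1)$, I would first record that $g^\varepsilon(s):=\frac{2W(q^\varepsilon(s))}{\varepsilon}=\varepsilon^{-1}\mathrm{sech}^4(s/\varepsilon)$ decays exponentially away from $s=0$ and that $\int_{\mathbb{R}}g^\varepsilon\,ds=\sigma$ (this is exactly the equipartition computation recorded just before the statement). Since $\mu_0^\varepsilon(B_R(x))\le \sigma^{-1}\int_{B_R(x)}g^\varepsilon(\overline{r_\varepsilon})\,dy$, I would split $B_R(x)$ into the tubular neighborhood of $M_0^i$, where $\overline{r_\varepsilon}=r_\varepsilon$ and $|\nabla r_\varepsilon|=1$, and its complement, where $g^\varepsilon(\overline{r_\varepsilon})$ is exponentially small and hence negligible. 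On the tube the coarea formula for $r_\varepsilon$ (with unit Jacobian) produces $\int_{\mathbb{R}}g^\varepsilon(s)\,\mathcal{H}^{d-1}(\{r_\varepsilon=s\}\cap B_R(x))\,ds$, and the nearest-point projection onto $M_0^i$ controls each level-set area $\mathcal{H}^{d-1}(\{r_\varepsilon=s\}\cap B_R(x))$ by a dimensional factor times $\mathcal{H}^{d-1}(M_0^i\cap B_{R+|s|}(x))$. Feeding in the density bound for $M_0^i$ inherited from $D_0$ via \eqref{initialdata2} and integrating $g^\varepsilon$ in $s$ then yields the stated local density bound; the companion absolute bound for bounded $R$ follows from the finite energy on a single period.

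For the convergence statements $(2)$ and $(3)$ I would use the same coarea representation, now exploiting that near $M_0^i$ the inequality in $(4)$ is an equality, so $\mu_0^\varepsilon(\phi)=\sigma^{-1}\int_{\mathbb{R}}g^\varepsilon(s)\big(\int_{\{r_\varepsilon=s\}}\phi\,d\mathcal{H}^{d-1}\big)\,ds+o(1)$. Since $g^\varepsilon\,ds\rightharpoonup \sigma\,\delta_0$ and the slices $\int_{\{r_\varepsilon=s\}}\phi\,d\mathcal{H}^{d-1}$ converge to $\int_{M_0^i}\phi\,d\mathcal{H}^{d-1}$ as $s\to0$, the profile collapses the energy onto $M_0^i$, giving $\mu_0^{\varepsilon_i}(\phi)\approx\int_{M_0^i}\phi\,d\mathcal{H}^{d-1}=\|\nabla\chi_{U_0^i}\|(\phi)$, and the second half of \eqref{initialdata2} passes this to $\mathcal{H}^{d-1}\lfloor M_0$. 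For $(3)$, the pointwise limit $\tanh(\overline{r_\varepsilon}/\varepsilon)\to\pm1$ according to $\mathrm{sgn}(r_\varepsilon)$, together with $\mathcal{L}^d(U_0\triangle U_0^i)\to0$ and the $O(\varepsilon)$ width of the transition layer, gives $L^1_{loc}$ convergence of $\varphi_0^\varepsilon$ to $2\chi_{U_0}-1$; the matching convergence of total variations $\|\nabla\varphi_0^\varepsilon\|\to 2\mathcal{H}^{d-1}\lfloor M_0=\|\nabla(2\chi_{U_0}-1)\|$ then upgrades this to $BV_{loc}$.

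The main obstacle is handling the two simultaneous limits cleanly: the profile concentrates at the intrinsic scale $\varepsilon_i\to0$ while the underlying surface $M_0^i$ is itself moving toward $M_0$. One must ensure that the reach of $M_0^i$ dominates $\varepsilon_i$, that the geometric constant in the coarea/projection estimate is uniform in $i$ and depends only on $D_0$ rather than on the $C^3$ norm of the individual $M_0^i$, and that the density bound on $M_0^i$ is inherited uniformly from $D_0$; this is precisely what forces the profile collapse to be combined with the measure convergence \eqref{initialdata2} rather than carried out in sequence. The exponential localization of $g^\varepsilon$ is the technical device that renders both the far-field contributions and the discrepancy between $\overline{r_\varepsilon}$ and $r_\varepsilon$ harmless.
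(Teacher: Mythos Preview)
The paper does not give its own proof of this proposition; it is quoted as Proposition~1.4 of Ilmanen~\cite{Ilmanen} and used as a black box. Your outline reproduces exactly Ilmanen's argument---the equipartition identity~\eqref{q} together with $|\nabla\overline{r_\varepsilon}|\le 1$ yields (4) and the pointwise bound $e_\varepsilon\le 2W(q^\varepsilon(\overline{r_\varepsilon}))/\varepsilon$, and the coarea formula for the signed distance combined with the exponential concentration $\varepsilon^{-1}\mathrm{sech}^4(\cdot/\varepsilon)\rightharpoonup\sigma\delta_0$ handles (1), (2), (3)---and you have correctly isolated the only genuine subtlety, namely that the $M_0^i$ must inherit a density bound from $D_0$ uniformly in $i$ and that $\varepsilon_i$ must be taken small relative to the reach of $M_0^i$, which in Ilmanen's construction is arranged by choosing the subsequence $\varepsilon_i$ after fixing each approximant.
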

Set
\[
  v^\varepsilon := \begin{cases}
    \frac{-\varphi^\varepsilon _t}{|\nabla\varphi^\varepsilon|} \frac{\nabla\varphi^\varepsilon}{|\nabla\varphi^\varepsilon|} & \text{if} \ |\nabla\varphi^\varepsilon|\not=0 ,\\
    0 & \text{otherwise}.
  \end{cases}
\]
The main result of this paper is the following:
\begin{theorem}\label{mainresults}
Let $d=2,3$ and $U_0\subset \Omega$ be a open set, where $U_0$ satisfies \eqref{initialdata1} and \eqref{initialdata2}. Assume that $\varphi^\varepsilon$ is a solution for \eqref{ac} and the initial data satisfies \eqref{initial}. Then there exists a subsequence $\varepsilon \to 0$ such that the following hold:
\begin{enumerate}
\item[(a)] There exists a family of $(d-1)$-integral Radon measures $\{ \mu _t \}_{t\in [0,\infty)}$ on $\mathbb{R}^d$ such that
\begin{enumerate}
\item[(a1)] $\mu ^\varepsilon \to \mu$ as Radon measures on $\mathbb{R}^d \times [0,\infty)$, where $d\mu :=d\mu_t dt$.
\item[(a2)] $\mu ^\varepsilon _t \to \mu _t$ as Radon measures on $\mathbb{R}^d $ for any $t \in [0,\infty)$.
\end{enumerate}
\item[(b)] There exists $\psi \in BV _{loc}( \Omega \times [0,\infty)) \cap C^{\frac{1}{2}} _{loc} ([0,\infty) ;L^1 (\Omega))$ such that
\begin{enumerate}
\item[(b1)] $\varphi ^{\varepsilon} \to 2\psi -1 \ \ \text{in} \ L^1 _{loc} ( \Omega \times [0,\infty))$ and a.e. pointwise.
\item[(b2)] $\psi(\cdot,0) =\chi _{U_0} $ a.e. on $\Omega$.
\item[(b3)] (Volume preserving property 1) $\psi (\cdot,t)$ is a characteristic function with 
\[ \int_{\Omega} \psi (\cdot,t) \, dx=\mathcal{L}^d (U_0)\]
for any $t\in [0,\infty)$.
\item[(b4)] $\| \nabla \psi (\cdot,t) \| (\phi) \leq \mu _t (\phi) $ for any $t\in [0,\infty)$ and $\phi \in C_c (\mathbb{R}^d;\mathbb{R}^+)$. Moreover $\spt\| \nabla \psi (\cdot,t) \| \subset \spt \mu _t  $ for any $t\in [0,\infty)$.
\end{enumerate}
\item[(c)] There exist $\epsilon \in (0,1)$ and $\lambda \in L^2 _{loc}(0,\infty)$ such that for any $T>0$, we have
\begin{equation}
\sup _{\varepsilon \in (0,\epsilon)} \int _0 ^T |\lambda^\varepsilon (t)|^2 \, dt<\infty \quad \text{and} \quad \lambda^\varepsilon \to \lambda \ \ \text{weakly in} \ L^2(0,T). 
\label{thmc}
\end{equation}
\item[(d)] There exists $g \in L^2 (\mu;\mathbb{R}^d)$ such that 
\begin{equation}
\begin{split}
&\lim _{\varepsilon \to 0} \frac{1}{\sigma}\int _{\mathbb{R}^d \times (0,\infty)}  -\lambda ^\varepsilon \sqrt{2W(\varphi^\varepsilon)}\nabla\varphi^\varepsilon \cdot \Phi \, dxdt = \int _{\mathbb{R}^d \times (0,\infty)} g \cdot \Phi \, d\mu
\end{split}
\label{limitlambda}
\end{equation}
for any $\Phi \in C_c (\mathbb{R}^d \times[0,\infty);\mathbb{R}^d)$.
\item[(e)] $\{ \mu _t \}_{t\in (0,\infty)}$ is a $L^2$-flow with a generalized velocity vector 
\begin{equation*}
v=h+g
\end{equation*}
and
\begin{equation*}
\lim _{\varepsilon \to 0} \int _{\mathbb{R}^d \times (0,\infty)}  v^\varepsilon  \cdot \Phi \, d\mu ^\varepsilon = \int _{\mathbb{R}^d \times (0,\infty)} v \cdot \Phi \, d\mu
\end{equation*}
for any $\Phi \in C_c (\mathbb{R}^d \times[0,\infty);\mathbb{R}^d)$. Moreover there exists a measurable function $\theta : \partial ^{\ast} \{ \psi =1 \}\to \mathbb{N}$ such that 
\begin{equation}
v=h-\frac{1}{\theta} \lambda \nu \qquad \mathcal{H}^{d}\text{-a.e. on} \ \partial ^{\ast} \{\psi =1 \},
\label{generalizedvelocity2}
\end{equation}
where $\nu$ is the inner unit normal vector of $\{ \psi(\cdot,t) =1 \}$ on $\partial ^{\ast} \{ \psi(\cdot,t) =1 \}$.
\item[(f)] (Volume preserving property 2)
\begin{equation*}
\int _\Omega v \cdot \nu \, d\| \nabla\psi (\cdot ,t) \|=0
\end{equation*}
for a.e. $t \in (0,\infty)$.
\end{enumerate}
\end{theorem}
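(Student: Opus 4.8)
The plan is to realise $\{\mu_t\}$ and $\psi$ as the $\varepsilon\to0$ limits of the forced flow \eqref{ac}, following the compactness-and-identification scheme of Ilmanen and Tonegawa but carrying along the nonlocal forcing $\lambda^\varepsilon\sqrt{2W(\varphi^\varepsilon)}$. First I would integrate the energy identity \eqref{deri2} in time: together with the initial density bound \eqref{bound} this gives, uniformly in $\varepsilon$, the energy bound $\sup_{t\ge0}\frac1\sigma\int_\Omega\big(\frac{\varepsilon|\nabla\varphi^\varepsilon|^2}{2}+\frac{W(\varphi^\varepsilon)}{\varepsilon}\big)\,dx\le C$ and the dissipation bound $\frac1\sigma\int_0^\infty\int_\Omega\varepsilon(\varphi_t^\varepsilon)^2\,dx\,dt\le C$. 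The monotonicity formula (Proposition \ref{propmono}) and the resulting density upper bounds (Proposition \ref{propdens}) then upgrade \eqref{bound} to a locally uniform bound on $\mu_t^\varepsilon(B_R(x))/(\omega_{d-1}R^{d-1})$ valid for \emph{every} $t$, which is the input for rectifiability and integrality. I would also propagate the one-sided discrepancy bound \eqref{negativity} in time by a maximum-principle argument; this is what turns the dissipation bound into the uniform estimate $\int_0^T\int_\Omega|v^\varepsilon|^2\,d\mu_t^\varepsilon\,dt\le\frac1\sigma\int_0^T\int_\Omega\varepsilon(\varphi_t^\varepsilon)^2\,dx\,dt$.

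For (b) I use that $\nabla k(\varphi^\varepsilon)=\sqrt{2W(\varphi^\varepsilon)}\nabla\varphi^\varepsilon$ and $\int_\Omega|\nabla k(\varphi^\varepsilon)|\,dx\le\sigma\,\mu_t^\varepsilon(\Omega)$, so $k(\varphi^\varepsilon)$ is bounded in $BV(\Omega)$ uniformly in $t$; the dissipation bound and Cauchy--Schwarz give $\|k(\varphi^\varepsilon)(\cdot,t_1)-k(\varphi^\varepsilon)(\cdot,t_2)\|_{L^1}\le C|t_1-t_2|^{1/2}$, so a subsequence converges to a limit of the regularity claimed in (b). Since $|\varphi^\varepsilon|\le1$ by \eqref{sup1} and $\int_\Omega W(\varphi^\varepsilon)/\varepsilon$ stays bounded, the pointwise limit takes only the values $\pm1$, forcing $\psi$ to be a characteristic function and giving (b1),(b2). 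The mass bound yields Radon-measure limits $\mu_t^\varepsilon\to\mu_t$ for a.e. $t$, which I upgrade to every $t$ and to $\mu^\varepsilon\to\mu$ with $d\mu=d\mu_t\,dt$ using the time-continuity and a diagonal argument, proving (a1),(a2); lower semicontinuity of the total variation gives (b4). For (c) I invoke the uniform $L^2$ estimate for the multiplier (Proposition \ref{keyprop}, whose proof is where $d=2,3$ enters) and extract a weak-$L^2$ limit $\lambda$, establishing \eqref{thmc}.

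Next, the $L^2$ bound on the generalized mean curvature (Proposition \ref{proph}) together with the density bounds of the first step let me apply Allard's rectifiability theorem, so $\mu_t$ is $(d-1)$-rectifiable for a.e. $t$; integrality follows by showing the discrepancy $\xi^\varepsilon:=\frac{\varepsilon|\nabla\varphi^\varepsilon|^2}{2}-\frac{W(\varphi^\varepsilon)}{\varepsilon}\to0$ as a Radon measure and invoking Tonegawa's integrality theorem, which produces the multiplicity $\theta$. For (d) I pass to the limit in $-\frac1\sigma\lambda^\varepsilon\sqrt{2W(\varphi^\varepsilon)}\nabla\varphi^\varepsilon\,dx\,dt$; since $\frac1\sigma\sqrt{2W(\varphi^\varepsilon)}\nabla\varphi^\varepsilon\,dx=\frac1\sigma\nabla k(\varphi^\varepsilon)\,dx$ converges to a vector measure carried by the interface and $\lambda^\varepsilon$ depends only on $t$, a weak-strong argument (the spatial pairing is equicontinuous in $t$) identifies the limit $g\in L^2(\mu)$ and yields \eqref{limitlambda}. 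The dissipation bound makes $v^\varepsilon$ bounded in $L^2(\mu^\varepsilon)$ with a limit $v$ satisfying \eqref{velo1}; testing the weak form of \eqref{ac} against $\nabla\eta$ and passing to the limit gives \eqref{velo2} and $v=h+g$, with the refined identity \eqref{generalizedvelocity2} coming from the integral structure on $\partial^*\{\psi=1\}$. Finally, passing to the limit in the conserved quantity \eqref{omega2}, using $k(\pm1)=\pm\frac23$, gives $\int_\Omega\psi(\cdot,t)\,dx=\mathcal{L}^d(U_0)$ for all $t$ (hence (b3)), and its weak time-derivative tested against $v$ yields (f).

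The main obstacle is the genuinely nonlocal part. As flagged in the introduction, the uniform $L^2$ control of $\lambda^\varepsilon$ (Proposition \ref{keyprop}) is exactly the estimate that fails for \eqref{rs} and \eqref{bb}, and it is what forces $d=2,3$. Closely tied to it is the passage to the limit in the product $\lambda^\varepsilon\sqrt{2W(\varphi^\varepsilon)}\nabla\varphi^\varepsilon$, where one factor converges only weakly in time and the other only weakly as measures; reconciling these into a single $g$ compatible with the integral multiplicity $\theta$, and hence producing \eqref{generalizedvelocity2}, is the delicate heart of the argument. The other substantial point is the integrality of $\mu_t$: one must show the extra term $\lambda^\varepsilon\sqrt{2W(\varphi^\varepsilon)}$ does not destroy the differential inequality underlying the vanishing of the discrepancy $\xi^\varepsilon$, which is precisely what the $L^2$ bound on $\lambda^\varepsilon$ is designed to guarantee.
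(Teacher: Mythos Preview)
Your outline is essentially the paper's own argument, except that the paper packages the rectifiability, integrality, vanishing of the discrepancy, and identification of $v=h+g$ into a single citation of the Mugnai--R\"oger convergence theorem (Theorem~\ref{mr}), whereas you unpack those steps (Allard, Tonegawa, etc.) by hand. The proofs of (b), of \eqref{generalizedvelocity2} via the integration by parts $-\lambda^\varepsilon\nabla k(\varphi^\varepsilon)\cdot\Phi=\lambda^\varepsilon k(\varphi^\varepsilon)\,\mathrm{div}\,\Phi$ followed by a weak--strong limit, and of (f) are the same as in the paper.

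There is, however, one genuine misconception. You attribute the dimensional restriction $d=2,3$ to Proposition~\ref{keyprop}. This is wrong: the $L^2$ bound on $\lambda^\varepsilon$ in Proposition~\ref{keyprop} is a purely PDE estimate valid in every dimension (the paper states explicitly that all of Section~3 holds for $d\ge2$). The restriction $d=2,3$ enters only through the R\"oger--Sch\"atzle lower bound for the Willmore functional, which is what guarantees vanishing of the discrepancy $\xi^\varepsilon$ and $L^2(\mu_t)$-control of the mean curvature from the diffuse bound $\int_\Omega\varepsilon\big(\Delta\varphi^\varepsilon-W'(\varphi^\varepsilon)/\varepsilon^2\big)^2\,dx$ (see Remark~\ref{remmc} and the hypotheses of Theorem~\ref{mr}). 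That result is currently known only for $d=2,3$; this is the true bottleneck, not the multiplier estimate. Your sentence ``the uniform $L^2$ control of $\lambda^\varepsilon$ \ldots\ is what forces $d=2,3$'' should be corrected accordingly, and your argument for rectifiability/integrality should make explicit that it rests on R\"oger--Sch\"atzle (or equivalently on Theorem~\ref{mr}) rather than on Proposition~\ref{keyprop}.
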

\section{Energy estimates}
In this section, we show the $L^2$ estimates for the multiplier $\lambda ^\varepsilon$. We also prove the monotonicity formula and the density upper bounds for \eqref{ac} by using the negativity of the discrepancy measure $\xi ^\varepsilon _t$. In this section we assume that $d\geq2$, $U_0\subset \Omega$ is a open set, where $U_0$ satisfies \eqref{initialdata1} and \eqref{initialdata2} and $\varphi^\varepsilon$ is a solution for \eqref{ac} and the initial data satisfies \eqref{initial}. Note that we do not need the assumption $d=2,3$ except Remark \ref{remmc} in this section.

\bigskip

We define a Radon measure $\xi _t ^{\varepsilon}$ by
\begin{equation*}
\xi _t ^{\varepsilon}(\phi) := \frac{1}{\sigma}\int _{\mathbb{R}^d} \phi \Big( \frac{\varepsilon |\nabla \varphi ^{\varepsilon}|^2}{2} - \frac{W (\varphi ^{\varepsilon} )}{\varepsilon} \Big) dx,
\end{equation*}
for any $\phi \in C_c (\mathbb{R}^d)$. $\xi _t ^{\varepsilon}$ is called the discrepancy measure. Set $\xi _{\varepsilon} =\xi _{\varepsilon}(x,t):=\frac{\varepsilon |\nabla \varphi ^{\varepsilon}(x,t)|^2}{2} - \frac{W (\varphi ^{\varepsilon}(x,t) )}{\varepsilon}$.
\begin{proposition}\label{negative}
$\xi _{\varepsilon} (x,t)\leq 0$ for any $(x,t) \in \mathbb{R}^d \times [0,\infty)$. Moreover $\xi _t ^{\varepsilon} $ is a non-positive measure for $t \in [0,\infty)$.
\end{proposition}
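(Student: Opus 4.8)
The plan is to prove the pointwise bound $\xi_\varepsilon\le 0$ and then read off the measure statement, since $\xi_t^\varepsilon$ has density $\xi_\varepsilon/\sigma$ with respect to $\mathcal{L}^d$. Because $\varphi^\varepsilon$ is a smooth solution of \eqref{ac} that is periodic on $\Omega=\mathbb{T}^d$, the function $\xi_\varepsilon$ is smooth and attains its maximum on each slab $\Omega\times[0,T_0]$. The initial sign is already available: \eqref{negativity} in Proposition \ref{prop1} gives $\xi_\varepsilon(\cdot,0)\le 0$. So it suffices to run a parabolic maximum principle for $\xi_\varepsilon$ on $\Omega\times[0,T_0]$ for arbitrary $T_0>0$.

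The first step is to derive the evolution equation of $\xi_\varepsilon$. Differentiating $\xi_\varepsilon=\tfrac{\varepsilon}{2}|\nabla\varphi^\varepsilon|^2-\tfrac1\varepsilon W(\varphi^\varepsilon)$ in $t$, using \eqref{ac} to express $\varphi^\varepsilon_t$ and $\nabla\varphi^\varepsilon_t$, and comparing with $\Delta\xi_\varepsilon$, the third-order terms $\varepsilon\nabla\varphi^\varepsilon\cdot\nabla\Delta\varphi^\varepsilon$ cancel and one is left with
\[
\partial_t\xi_\varepsilon=\Delta\xi_\varepsilon-\varepsilon|\nabla^2\varphi^\varepsilon|^2+\frac{(W'(\varphi^\varepsilon))^2}{\varepsilon^3}-2\lambda^\varepsilon\varphi^\varepsilon|\nabla\varphi^\varepsilon|^2-\frac{\lambda^\varepsilon W'(\varphi^\varepsilon)\sqrt{2W(\varphi^\varepsilon)}}{\varepsilon^2}.
\]
The crucial point is the treatment of the two nonlocal terms. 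Since $|\varphi^\varepsilon|\le 1$ by \eqref{sup1}, we have $\sqrt{2W(\varphi^\varepsilon)}=1-(\varphi^\varepsilon)^2$, and hence $W'(\varphi^\varepsilon)=-2\varphi^\varepsilon\sqrt{2W(\varphi^\varepsilon)}$ and $\nabla\sqrt{2W(\varphi^\varepsilon)}=-2\varphi^\varepsilon\nabla\varphi^\varepsilon$. Substituting these identities and using $|\nabla\varphi^\varepsilon|^2-2W(\varphi^\varepsilon)/\varepsilon^2=\tfrac{2}{\varepsilon}\xi_\varepsilon$, the nonlocal terms collapse to a single zeroth-order term proportional to $\xi_\varepsilon$:
\[
\partial_t\xi_\varepsilon=\Delta\xi_\varepsilon-\varepsilon|\nabla^2\varphi^\varepsilon|^2+\frac{(W'(\varphi^\varepsilon))^2}{\varepsilon^3}-\frac{4\lambda^\varepsilon\varphi^\varepsilon}{\varepsilon}\,\xi_\varepsilon.
\]

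With this identity in hand I would run a maximum principle with an exponential weight. Since $\lambda^\varepsilon$ is continuous in $t$ and $|\varphi^\varepsilon|\le1$, the coefficient $c:=-4\lambda^\varepsilon\varphi^\varepsilon/\varepsilon$ is bounded on $\Omega\times[0,T_0]$; fix $\Lambda>\sup|c|$ and set $\tilde\xi:=e^{-\Lambda t}\xi_\varepsilon$. If $\sup_{\Omega\times[0,T_0]}\tilde\xi>0$, it is attained at some $(x_0,t_0)$ with $t_0>0$ (as $\tilde\xi(\cdot,0)\le0$). There $\nabla\xi_\varepsilon=0$, $\Delta\xi_\varepsilon\le0$ and $\partial_t\xi_\varepsilon\ge\Lambda\xi_\varepsilon$. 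If $\nabla\varphi^\varepsilon(x_0,t_0)=0$ then $\xi_\varepsilon=-W/\varepsilon\le0$, contradicting positivity; otherwise $\nabla\xi_\varepsilon=0$ forces $\nabla\varphi^\varepsilon$ to be an eigenvector of $\nabla^2\varphi^\varepsilon$ with eigenvalue $W'(\varphi^\varepsilon)/\varepsilon^2$, whence $|\nabla^2\varphi^\varepsilon|^2\ge (W'(\varphi^\varepsilon))^2/\varepsilon^4$ and the ``bad'' term satisfies $-\varepsilon|\nabla^2\varphi^\varepsilon|^2+(W'(\varphi^\varepsilon))^2/\varepsilon^3\le0$. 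Plugging into the identity gives $\Lambda\xi_\varepsilon\le\partial_t\xi_\varepsilon\le c\,\xi_\varepsilon$ at $(x_0,t_0)$, i.e. $\Lambda\le c\le\sup|c|<\Lambda$ since $\xi_\varepsilon(x_0,t_0)>0$, a contradiction. Hence $\xi_\varepsilon\le0$ on $\Omega\times[0,T_0]$; letting $T_0\to\infty$ and using periodicity proves $\xi_\varepsilon\le0$ on $\mathbb{R}^d\times[0,\infty)$, and the nonpositivity of $\xi_t^\varepsilon$ follows.

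The main obstacle is the nonlocal term: one must show that, despite its presence, the discrepancy still obeys a maximum principle. This hinges on the algebraic identity $W'=-2\varphi^\varepsilon\sqrt{2W}$ (valid precisely because $|\varphi^\varepsilon|\le1$), which converts the two extra terms into a multiple of $\xi_\varepsilon$, and on the eigenvector trick at the maximum point, which absorbs the a priori positive term $(W'(\varphi^\varepsilon))^2/\varepsilon^3$ into $\varepsilon|\nabla^2\varphi^\varepsilon|^2$. The indefinite sign of $\lambda^\varepsilon$ then forces the use of the exponential weight $e^{-\Lambda t}$ rather than a bare comparison.
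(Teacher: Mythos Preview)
Your argument is correct. The paper, however, takes a different route: rather than working directly with $\xi_\varepsilon$, it introduces the implicit ``distance'' variable $r$ via $\varphi^\varepsilon=q^\varepsilon(r)=\tanh(r/\varepsilon)$, observes that $\xi_\varepsilon\le 0$ is equivalent to $|\nabla r|\le 1$, and applies the maximum principle to $|\nabla r|^2$. In the $r$-variable the multiplier enters the equation for $r$ only as the additive constant $\lambda^\varepsilon$, so after taking a gradient it disappears entirely from the evolution of $|\nabla r|^2$; no exponential weight is needed, and the term $-2|\nabla^2 r|^2$ is already of the right sign. Your direct approach trades that cleanliness for not having to justify the change of variables (which tacitly requires $|\varphi^\varepsilon|<1$ strictly): you instead exploit the algebraic identity $W'=-2\varphi^\varepsilon\sqrt{2W}$, specific to this double-well, to collapse the multiplier contributions into the zeroth-order coefficient $-4\lambda^\varepsilon\varphi^\varepsilon/\varepsilon$ in front of $\xi_\varepsilon$, and you handle the potentially positive term $(W')^2/\varepsilon^3$ with the eigenvector observation at the maximum (the standard Ilmanen device). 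Both proofs ultimately rely on the special form $\lambda^\varepsilon\sqrt{2W(\varphi^\varepsilon)}$ of the nonlocal forcing; neither would work for the Rubinstein--Sternberg multiplier $\lambda_1^\varepsilon$.
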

\begin{proof}
We define a function $r:\mathbb{R}^d \times [0,\infty) \to \mathbb{R}$ by
\[ \varphi ^{\varepsilon} (x,t) =q^{\varepsilon} (r(x,t)) .\]
By \eqref{q} we have
\[ \frac{\varepsilon |\nabla \varphi ^{\varepsilon} |^2/2}{W(\varphi ^{\varepsilon}) / \varepsilon}\leq |\nabla r|^2 \ \ \text{on} \ \ \mathbb{R}^d \times[0,\infty).\]
Hence, if  $|\nabla r|\leq 1$ then $\displaystyle \frac{\varepsilon |\nabla \varphi ^{\varepsilon} |^2 /2}{W(\varphi ^{\varepsilon}) / \varepsilon}\leq 1$.
Thus we only need to prove that $|\nabla r|\leq 1$ on $\mathbb{R}^d \times [0,\infty)$. 

Let $g (q):=k'(q)=\sqrt{2W (q)}$ for $q \in \mathbb{R}$. By \eqref{q} we have
\begin{equation}
q^{\varepsilon} _r =\frac{ g (q^{\varepsilon})}{\varepsilon} \qquad \text{and} \qquad q^{\varepsilon} _{rr}= \frac{( g (q^{\varepsilon}) )_r}{\varepsilon}=\frac{g _q ( q^{\varepsilon} ) }{\varepsilon} q^{\varepsilon} _r.
\label{q2} 
\end{equation}
By \eqref{ac} and \eqref{q2} we obtain
\begin{equation*}
\begin{split}
 q_r ^{\varepsilon} r_t &= q_r ^{\varepsilon} \Delta r + q_{rr} ^{\varepsilon} |\nabla r|^2 -q_{rr} ^{\varepsilon} +\lambda ^\varepsilon q^\varepsilon _r \\
&=  q_r ^{\varepsilon} \Delta r + q_{r} ^{\varepsilon} \frac{g _q}{\varepsilon} (|\nabla r|^2 -1) +\lambda ^\varepsilon q^\varepsilon _r .
\end{split}
\end{equation*}
Thus we have
\[ r_t =  \Delta r + \frac{g _q}{\varepsilon} (|\nabla r|^2 -1) +\lambda ^\varepsilon \]
and
\begin{equation}
\partial _t |\nabla r|^2=\Delta |\nabla r |^2 - 2|\nabla^2 r|^2 +\frac{2}{\varepsilon} \nabla r\cdot \nabla g _q (|\nabla r|^2 -1) + \frac{2g _q}{\varepsilon} \nabla r\cdot \nabla |\nabla r|^2. 
\label{max}
\end{equation}
Note that $\nabla \lambda^\varepsilon =0$. By the assumption we have $|\nabla r(\cdot,0)|=|\nabla \overline{r_\varepsilon}|\leq 1$ on $\mathbb{R}^d$. By \eqref{max} and the maximal principle we obtain $|\nabla r|\leq 1$ in $\mathbb{R}^d \times [0,\infty)$.
\end{proof}
By \eqref{deri2} and \eqref{bound} we have
\begin{proposition}
For any $0\leq t_1 <t_2<\infty$ and $\varepsilon >0$ we have
\begin{equation}
\mu _{t_2} ^\varepsilon (\Omega) +\frac{1}{\sigma}\int_{t_1} ^{t_2} \int _{\Omega} \varepsilon |\partial _t \varphi ^\varepsilon |^2 \, dxdt = \mu ^\varepsilon _{t_1} (\Omega) \leq D_1.
\label{bound2}
\end{equation}
\end{proposition}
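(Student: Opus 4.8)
The plan is to read \eqref{bound2} as nothing more than the integrated form of the energy dissipation identity \eqref{deri2}, supplemented by the initial density bound \eqref{bound}. The first observation is that, since $\varphi^\varepsilon$ is periodic with period $\Omega$, the mass of $\mu_t^\varepsilon$ over the fundamental domain coincides with the Ginzburg--Landau energy restricted to $\Omega$, namely
\begin{equation*}
\sigma \mu_t^\varepsilon(\Omega) = \int_\Omega \Big( \frac{\varepsilon|\nabla\varphi^\varepsilon|^2}{2} + \frac{W(\varphi^\varepsilon)}{\varepsilon}\Big)\, dx.
\end{equation*}
This is immediate from the definition of $\mu_t^\varepsilon$ together with the periodicity of $\varphi^\varepsilon$.

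Next I would invoke the computation \eqref{deri2}, which (using \eqref{h} to annihilate the nonlocal contribution of $\lambda^\varepsilon\sqrt{2W(\varphi^\varepsilon)}$) yields
\begin{equation*}
\frac{d}{dt}\big(\sigma\mu_t^\varepsilon(\Omega)\big) = -\int_\Omega \varepsilon\,(\partial_t\varphi^\varepsilon)^2\, dx \le 0 .
\end{equation*}
Integrating this identity over $[t_1,t_2]$ and dividing by $\sigma$ produces exactly the equality asserted in \eqref{bound2}. For the concluding inequality, the non-negativity of the dissipation term shows that $t\mapsto\mu_t^\varepsilon(\Omega)$ is non-increasing, so $\mu_{t_1}^\varepsilon(\Omega)\le\mu_0^\varepsilon(\Omega)$; and since $\Omega=[0,1)^d$ sits inside a fixed ball $B_R(x_0)$ (one may take $R=\sqrt{d}$), the initial estimate \eqref{bound} gives $\mu_0^\varepsilon(\Omega)\le\mu_0^\varepsilon(B_R(x_0))\le D_1$, closing the chain.

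There is no genuine obstacle here: the statement is a direct corollary of results already established in the excerpt. The only points meriting a line of justification are that \eqref{deri2} is applied to a function with enough regularity to differentiate under the integral sign and to integrate by parts, which is legitimate because $\varphi^\varepsilon$ is the smooth global solution of \eqref{ac} furnished by Golovaty's existence result, and that the nonlocal term truly drops out, which is precisely the content of the volume-type identity \eqref{h}. Thus I would present the argument as a short integration of \eqref{deri2} followed by the monotonicity-plus-\eqref{bound} bound.
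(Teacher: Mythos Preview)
Your proposal is correct and follows exactly the paper's approach: the paper simply writes ``By \eqref{deri2} and \eqref{bound} we have'' and states the proposition, and you have faithfully unpacked that one-line justification. The only additional detail you supply---that $\Omega$ sits inside a ball so that \eqref{bound} bounds $\mu_0^\varepsilon(\Omega)$---is a reasonable clarification of how \eqref{bound} is being applied.
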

By an argument similar to that in \cite{bronsard-stoth}, we obtain the following key estimate:
\begin{proposition}\label{keyprop}
There exist $c_1=c_1 (\omega,D_1) >0$ and $\epsilon_1=\epsilon_1(\omega) >0$ such that
\begin{equation}
\sup _{\varepsilon \in (0,\epsilon_1)} \int _0 ^T |\lambda^\varepsilon (t)|^2 \, dt \leq c_1 (1+T).
\label{L2est}
\end{equation}
\end{proposition}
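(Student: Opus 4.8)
The plan is to write $\lambda^\varepsilon=N^\varepsilon/D^\varepsilon$ with $N^\varepsilon:=-2\int_\Omega\varphi^\varepsilon\big(\tfrac{\varepsilon|\nabla\varphi^\varepsilon|^2}{2}+\tfrac{W(\varphi^\varepsilon)}{\varepsilon}\big)\,dx$ and $D^\varepsilon:=\int_\Omega W(\varphi^\varepsilon)\,dx$, using the second expression for $\lambda^\varepsilon$ obtained by integration by parts in the introduction. The strategy is to bound $D^\varepsilon$ from below uniformly in $t$ and in small $\varepsilon$ (this is where the constraint \eqref{omega2} enters), and then to bound $N^\varepsilon$ from above pointwise in $t$ in terms of the dissipation $\int_\Omega\varepsilon(\varphi^\varepsilon_t)^2\,dx$ and the mass $\mu^\varepsilon_t(\Omega)$. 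The two summands of $c_1(1+T)$ should come from integrating the resulting pointwise inequality: by \eqref{bound2} the dissipation integrates to a constant ($\le\sigma D_1$), while the mass term, bounded by $D_1$ for each $t$, integrates to a contribution linear in $T$.

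For the denominator I would argue as follows. By Proposition \ref{negative} the discrepancy is nonpositive, so $\tfrac{\varepsilon|\nabla\varphi^\varepsilon|^2}{2}\le\tfrac{W(\varphi^\varepsilon)}{\varepsilon}$ and hence $\int_\Omega\tfrac{W}{\varepsilon}\,dx\ge\tfrac{\sigma}{2}\mu^\varepsilon_t(\Omega)$, while the elementary inequality $\tfrac{\varepsilon|\nabla\varphi^\varepsilon|^2}{2}+\tfrac{W}{\varepsilon}\ge\sqrt{2W(\varphi^\varepsilon)}\,|\nabla\varphi^\varepsilon|=|\nabla k(\varphi^\varepsilon)|$ gives $\sigma\mu^\varepsilon_t(\Omega)\ge\|\nabla k(\varphi^\varepsilon)\|(\Omega)$. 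It then suffices to bound the total variation of $k(\varphi^\varepsilon)$ below. I would use a dichotomy: either $\int_\Omega W/\varepsilon$ is already above a fixed constant, or $W(\varphi^\varepsilon)$ is so small in $L^1$ that $\varphi^\varepsilon$ is close to $\pm1$ in measure, in which case \eqref{omega2} forces both phases $\{\varphi^\varepsilon\approx 1\}$ and $\{\varphi^\varepsilon\approx-1\}$ to have measure $\ge c(\omega)$, and the relative isoperimetric inequality on $\mathbb{T}^d$ (via the coarea formula) yields $\|\nabla k(\varphi^\varepsilon)\|(\Omega)\ge c'(\omega)$. Either way $\int_\Omega W/\varepsilon\ge c(\omega)$ for $\varepsilon<\epsilon_1(\omega)$, the threshold $\epsilon_1(\omega)$ being dictated by the layer width needed for the measure estimate; thus $D^\varepsilon\ge c(\omega)\,\varepsilon$.

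The genuinely hard part is the numerator, and here I would follow \cite{bronsard-stoth}. The key observation is that $N^\varepsilon=-2\int_\Omega\varphi^\varepsilon\,d(\sigma\mu^\varepsilon_t)$ is, up to a factor, the energy-weighted average of $\varphi^\varepsilon$; since the energy measure concentrates on the diffuse interface, where $\varphi^\varepsilon$ passes through the \emph{odd} optimal profile $q^\varepsilon$, the leading-order contribution cancels and one expects $N^\varepsilon=O(\varepsilon)$. To make this quantitative I would exploit that the conservation law \eqref{h}, i.e. $\int_\Omega\sqrt{2W(\varphi^\varepsilon)}\,\varphi^\varepsilon_t\,dx=0$, is precisely the statement that $\lambda^\varepsilon$ annihilates the symmetric part of the motion, and combine it with the negativity of the discrepancy from Proposition \ref{negative} to estimate the remaining (curvature-driven) part of $N^\varepsilon$ by Cauchy–Schwarz against the dissipation $\int_\Omega\varepsilon(\varphi^\varepsilon_t)^2\,dx$ and the mass $\mu^\varepsilon_t(\Omega)\le D_1$. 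The target is a pointwise bound of the form $|\lambda^\varepsilon(t)|^2\le C(\omega,D_1)\big(\int_\Omega\varepsilon(\varphi^\varepsilon_t)^2\,dx+1\big)$, after which \eqref{L2est} follows by integration and \eqref{bound2}.

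I expect this numerator bound to be the main obstacle, for a structural reason. Writing $w^\varepsilon:=\varepsilon\Delta\varphi^\varepsilon-W'(\varphi^\varepsilon)/\varepsilon=\varepsilon\varphi^\varepsilon_t-\lambda^\varepsilon\sqrt{2W(\varphi^\varepsilon)}$, any estimate that pairs $w^\varepsilon$ with $\sqrt{2W}$ and controls it through $\int_\Omega(w^\varepsilon)^2$ is circular, since by \eqref{deri2} and \eqref{h} one has $\int_\Omega(w^\varepsilon)^2=\varepsilon^2\int_\Omega(\varphi^\varepsilon_t)^2+|\lambda^\varepsilon|^2\!\int_\Omega 2W$, so the diffuse curvature energy already contains $\lambda^\varepsilon$; the constrained flow dissipates only $\int_\Omega\varepsilon(\varphi^\varepsilon_t)^2$, not $\tfrac1\varepsilon\int_\Omega(w^\varepsilon)^2$, and there is no a priori Willmore-type bound to fall back on. Breaking the circularity must therefore rest on the spatial structure of $\varphi^\varepsilon$, namely on extracting the cancellation of the symmetric profile using the discrepancy inequality; this is exactly the place where the hypothesis \eqref{omega2} and the nonpositivity of $\xi_\varepsilon$ become indispensable, and where the constant is forced to depend on $\omega$ and $D_1$ and the bound to grow linearly in $T$.
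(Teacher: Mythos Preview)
Your denominator argument is plausible, but the proof stalls at the numerator: you need $|N^\varepsilon|\le C\varepsilon\big((\int_\Omega\varepsilon(\varphi^\varepsilon_t)^2)^{1/2}+1\big)$ to compensate for $D^\varepsilon\ge c(\omega)\varepsilon$, and you do not supply it. The heuristic that $N^\varepsilon=-2\sigma\int_\Omega\varphi^\varepsilon\,d\mu^\varepsilon_t$ vanishes to leading order by oddness of the profile is correct for the exact one--dimensional transition, but turning this into a quantitative $O(\varepsilon)$ bound for a general solution on $\mathbb{T}^d$ requires controlling the deviation from the optimal profile, which is precisely the missing ingredient. Your closing paragraph correctly diagnoses the circularity of pairing $w^\varepsilon$ with $\sqrt{2W}$, but then stops; invoking ``the discrepancy inequality'' and ``the cancellation of the symmetric profile'' is not yet an argument. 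The Bronsard--Stoth paper you cite works in radial symmetry, where ODE tools give direct access to the profile; those tools are unavailable here.

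The paper bypasses the $N^\varepsilon/D^\varepsilon$ decomposition entirely. Instead one multiplies the equation by $\nabla\varphi^\varepsilon\cdot\zeta$ and integrates by parts, producing an identity in which $\lambda^\varepsilon$ appears multiplied not by $\int_\Omega W(\varphi^\varepsilon)$ but by $-\int_\Omega k(\varphi^\varepsilon)\,\div\zeta$, because $\sqrt{2W(\varphi^\varepsilon)}\nabla\varphi^\varepsilon=\nabla k(\varphi^\varepsilon)$. One then chooses $\zeta=\nabla\eta$ where $\eta$ solves $-\Delta\eta=k(\varphi^\varepsilon)-\dashint k(\varphi^\varepsilon)$ (mollified), so that the coefficient of $\lambda^\varepsilon$ becomes essentially $\int_\Omega k(\varphi^\varepsilon)^2-\big(\int_\Omega k(\varphi^\varepsilon)\big)^2$, the variance of $k(\varphi^\varepsilon)$. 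The pointwise inequality $k^2(s)-\tfrac{4}{9}\ge -W(s)$ for $|s|\le1$, together with \eqref{omega2} and the energy bound, gives this variance a uniform lower bound $\ge\omega/3$ for small $\varepsilon$. The remaining terms from the tested equation are controlled by $\|\zeta\|_{C^1}$ (bounded by elliptic regularity) times $(\int_\Omega\varepsilon(\varphi^\varepsilon_t)^2)^{1/2}+\mu^\varepsilon_t(\Omega)$, yielding $|\lambda^\varepsilon(t)|\le C(\omega,D_1)\big((\int_\Omega\varepsilon(\varphi^\varepsilon_t)^2)^{1/2}+1\big)$ directly. The point is that the test--function approach produces an $O(1)$ coefficient for $\lambda^\varepsilon$, so no $O(\varepsilon)$ smallness of the numerator is ever needed.
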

\begin{proof}
Let $\Omega _\delta \subset \mathbb{R}^d$ be a open set with a smooth boundary $\partial \Omega_\delta$ such that $\Omega \subset \Omega_\delta$ and $\lim_{\delta\to 0}\dist (\partial \Omega_\delta ,\Omega)=0$. Let $\zeta =(\zeta ^1,\zeta^2,\dots ,\zeta ^d)\in C ^1 ( \Omega_\delta ;\mathbb{R}^d )$ satisfy $\zeta \cdot n =0$ on $\partial \Omega_\delta$, where $n$ is the outer unit normal vector of $\Omega_\delta$. Multiply \eqref{ac} by $ \nabla \varphi ^\varepsilon \cdot \zeta$ and integrate over $\Omega_\delta$. Then by the integration by parts we have
\begin{equation}
\begin{split}
& \int _{\Omega_\delta} \varepsilon\varphi ^\varepsilon _t \nabla \varphi ^\varepsilon \cdot \zeta \, dx +\sum _{i,j=1} ^d \int _{\Omega_\delta} \varepsilon \varphi ^\varepsilon _{x_i} \varphi ^\varepsilon _{x_j} \zeta ^j _{x_i} \, dx -\int _{\Omega_\delta } \Big( \frac{\varepsilon |\nabla \varphi ^{\varepsilon}|^2}{2} + \frac{W (\varphi ^{\varepsilon} )}{\varepsilon} \Big)  \div \zeta \, dx\\
=&-\lambda ^\varepsilon \int _{\Omega_\delta} k(\varphi ^\varepsilon) \div \zeta \, dx.
\end{split}
\label{321}
\end{equation}
Note that since $\nabla ( k(\varphi^\varepsilon) ) =\sqrt{2W(\varphi ^\varepsilon)} \nabla \varphi ^\varepsilon $ we have
\[ \int_{\Omega_\delta} \lambda^\varepsilon \sqrt{2W(\varphi^\varepsilon)} \nabla \varphi ^\varepsilon \cdot \zeta  \, dx = \lambda^\varepsilon \int _{\Omega_\delta} \nabla (k(\varphi ^\varepsilon)) \cdot \zeta \, dx = -\lambda^\varepsilon \int _{\Omega_\delta} k(\varphi ^\varepsilon) \div \zeta \, dx.\]

Let $\Psi _\delta \in C_c ^\infty (B_\delta (0))$ be a Dirac sequence and $\eta$ be a solution of the following equation:
\begin{equation}
\left\{ 
\begin{array}{ll}
-\Delta \eta =k(\varphi ^\varepsilon (\cdot,t) ) \ast \Psi _\delta - \dashint _{\Omega } ( k(\varphi ^\varepsilon (\cdot ,t) ) \ast \Psi _\delta)  & \text{in} \ \Omega_\delta, \\
\frac{\partial \eta}{\partial n} =0 & \text{on} \ \partial \Omega_\delta, \\
\dashint _{\Omega } \eta \, dx=0 &
\end{array} \right.
\label{322}
\end{equation}
for $t\geq 0$. By the standard arguments of the elliptic PDE and \eqref{sup1} we have
\begin{equation*}
\|\eta(\cdot ,t) \|_{C^{2,\alpha} (\Omega_\delta)} \leq c_2 , \quad t\geq 0,
\end{equation*}
where $c_2=c_2 (\delta,d) >0$. Set $\zeta := \nabla \eta$. Then by \eqref{321}, \eqref{322} and \eqref{323} we have
\begin{equation}
\begin{split}
&|\lambda^\varepsilon| \Big| \int _{\Omega_\delta} k(\varphi ^\varepsilon) \Big(  k(\varphi ^\varepsilon) \ast \Psi _\delta - \dashint _{\Omega_\delta } k(\varphi ^\varepsilon ) \ast \Psi _\delta \Big) \, dx \Big| 
= |\lambda^\varepsilon| \Big| \int _{\Omega_\delta} k(\varphi ^\varepsilon) (-\div \zeta ) \, dx \Big| \\
\leq & \Big( \int  _{\Omega_\delta } \varepsilon |\nabla \varphi ^\varepsilon |^2 \, dx \Big) ^{\frac{1}{2}} 
\Big( \int  _{\Omega_\delta} \varepsilon (\varphi _t ^\varepsilon )^2 \, dx \Big) ^{\frac{1}{2}} \| \zeta \|_{C^0 (\Omega_\delta) } +2\| \zeta \|_{C^1 (\Omega_\delta )  }  \mu _t ^\varepsilon (\Omega_\delta )\\
\leq &c_3 \Big( \Big( \int  _{\Omega_\delta } \varepsilon (\varphi _t ^\varepsilon )^2 \, dx\Big)^{\frac{1}{2}} +1 \Big),
\end{split}
\label{323}
\end{equation}
where $ c_3=c_3(\delta,D_1)>0$. We compute that 
\begin{equation}
k^2(s) -\frac{4}{9} \geq -W(s)  
\label{324}
\end{equation}
for $|s|\leq 1$.
By using the arguments of \cite[p161]{stoth} and $\|\varphi^\varepsilon\|_{\infty}\leq 1$, there exists $C >0$ such that
\begin{equation}
\begin{split}
&\sup _{|a|\leq\delta} \int _{\Omega_\delta} |k(\varphi^\varepsilon (x+a,t) )-k(\varphi ^\varepsilon (x,t))| \, dx\\
\leq&2\sup _{|a|\leq\delta} \int _{\Omega_\delta} |\varphi^\varepsilon (x+a,t) -\varphi ^\varepsilon (x,t)| \, dx 
\leq C \delta^{\frac{1}{2}}.
\end{split}
\label{327}
\end{equation}
By \eqref{omega2}, \eqref{324} and \eqref{327} we compute that
\begin{equation}
\begin{split}
& \int _{\Omega_\delta } k(\varphi ^\varepsilon) \Big(  k(\varphi ^\varepsilon) \ast \Psi _\delta - \dashint _{\Omega_\delta } k(\varphi ^\varepsilon ) \ast \Psi _\delta \Big) \, dx \\
=& \frac{4}{9} \mathcal{L}^d(\Omega_\delta)  + \int _{\Omega_\delta } k^2 (\varphi ^\varepsilon ) -\frac{4}{9} \, dx -\int _{\Omega_\delta } (k(\varphi^\varepsilon) -k(\varphi^\varepsilon) \ast \Psi _\delta) k(\varphi^\varepsilon) \, dx\\
&-\frac{1}{\mathcal{L}^d(\Omega_\delta) }  \Big( \int _{\Omega_\delta } k (\varphi ^\varepsilon ) \, dx \Big)^2 +  \frac{1}{\mathcal{L}^d(\Omega_\delta)} \int _{\Omega_\delta } k (\varphi ^\varepsilon ) \, dx \int _{\Omega _\delta} k(\varphi ^\varepsilon)-k(\varphi ^\varepsilon) \ast \Psi _\delta \, dx\\
\geq &  \frac{1}{\mathcal{L}^d(\Omega_\delta) } \Big( \frac{4}{9} \mathcal{L}^d(\Omega_\delta) ^2 - \Big( \int _{\Omega_\delta} k (\varphi ^\varepsilon ) \, dx \Big)^2 \Big) - \int _{\Omega_\delta} W(\varphi ^\varepsilon) \, dx \\
&-C\sup _{|a|\leq\delta} \int _{\Omega_\delta} |k(\varphi^\varepsilon (x+a,t)) -k(\varphi ^\varepsilon (x,t))| \, dx\\
\geq & \frac{1}{\mathcal{L}^d(\Omega_\delta) } \Big( \frac{4}{9} \mathcal{L}^d(\Omega_\delta) ^2 - \Big( \int _{\Omega_\delta} k (\varphi ^\varepsilon ) \, dx \Big)^2 \Big) -C(\varepsilon +\delta ^{\frac{1}{2}} ) \geq \frac{\omega}{3},
\end{split}
\label{326}
\end{equation}
for sufficiently small $\varepsilon$ and $\delta$. By \eqref{323} and \eqref{326} we obtain
\begin{equation*}
|\lambda^\varepsilon| \leq 2 \omega^{-1} c_3 \Big( \Big( \int  _{\Omega } \varepsilon (\varphi _t ^\varepsilon )^2 \, dx\Big)^{\frac{1}{2}} +1 \Big)
\end{equation*}
and
\begin{equation*}
\begin{split}
\int _{0} ^T |\lambda^\varepsilon|^2\, dt &\leq 4 \omega^{-2} c ^2 _3 \int _ 0 ^T \Big( \int  _{\Omega } \varepsilon (\varphi _t ^\varepsilon )^2 \, dx+1 \Big) \, dt \\
&\leq 4 \omega^{-2} c ^2 _3 (\sigma \mu _{0} ^\varepsilon (\Omega) +T)\leq 4 \omega^{-2} c ^2 _3 (\sigma D_1 +T),
\end{split}
\end{equation*}
where \eqref{bound2} is used.
\end{proof}
\begin{proposition}\label{proph}
There exist $c_4=c_4(\omega,D_1)>0$ such that
\begin{equation}
\int _0 ^T \int _{\Omega} \varepsilon \Big( \Delta \varphi ^{\varepsilon} -\frac{W' (\varphi ^{\varepsilon})}{\varepsilon ^2} \Big)^2 \, dx dt \leq c_4(1+T)
\label{bound3}
\end{equation}
for any $T>0$ and $\varepsilon \in (0,\epsilon_1)$.
\end{proposition}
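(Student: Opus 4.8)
The plan is to solve the PDE \eqref{ac} pointwise for the combination $\Delta \varphi^\varepsilon - W'(\varphi^\varepsilon)/\varepsilon^2$, square it, and integrate in space and time; the crucial point is that the resulting cross term integrates to zero by the volume-preserving identity \eqref{h}. Concretely, dividing \eqref{ac} by $\varepsilon$ yields
\[
\Delta \varphi^\varepsilon - \frac{W'(\varphi^\varepsilon)}{\varepsilon^2} = \varphi^\varepsilon_t - \frac{\lambda^\varepsilon}{\varepsilon}\sqrt{2W(\varphi^\varepsilon)}.
\]
Squaring and multiplying by $\varepsilon$ gives
\[
\varepsilon\Big( \Delta \varphi^\varepsilon - \frac{W'(\varphi^\varepsilon)}{\varepsilon^2} \Big)^2 = \varepsilon(\varphi^\varepsilon_t)^2 - 2\lambda^\varepsilon \varphi^\varepsilon_t \sqrt{2W(\varphi^\varepsilon)} + \frac{2(\lambda^\varepsilon)^2}{\varepsilon} W(\varphi^\varepsilon).
\]
Upon integrating over $\Omega$, the middle term becomes $-2\lambda^\varepsilon \int_\Omega \varphi^\varepsilon_t \sqrt{2W(\varphi^\varepsilon)}\, dx$, which vanishes by \eqref{h} (recall that $\lambda^\varepsilon$ is spatially constant, so it pulls out of the integral).

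It then remains to bound the two surviving terms after integrating in $t$ over $[0,T]$. For the first, \eqref{bound2} with $t_1=0$, $t_2=T$ gives $\int_0^T \int_\Omega \varepsilon(\varphi^\varepsilon_t)^2\, dx\, dt \leq \sigma \mu_0^\varepsilon(\Omega) \leq \sigma D_1$. For the second, I would observe that $W(\varphi^\varepsilon)/\varepsilon$ is dominated by the full energy density, so $\frac{1}{\varepsilon}\int_\Omega 2W(\varphi^\varepsilon)\, dx \leq 2\sigma \mu_t^\varepsilon(\Omega) \leq 2\sigma D_1$ by \eqref{bound2}; hence the term $\frac{2(\lambda^\varepsilon)^2}{\varepsilon}\int_\Omega W(\varphi^\varepsilon)\, dx$ is bounded by $2\sigma D_1 (\lambda^\varepsilon)^2$, and Proposition \ref{keyprop} controls $\int_0^T (\lambda^\varepsilon)^2\, dt \leq c_1(1+T)$ for $\varepsilon \in (0,\epsilon_1)$. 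Combining these,
\[
\int_0^T \int_\Omega \varepsilon\Big( \Delta \varphi^\varepsilon - \frac{W'(\varphi^\varepsilon)}{\varepsilon^2} \Big)^2\, dx\, dt \leq \sigma D_1 + 2\sigma D_1 c_1 (1+T),
\]
which is of the required form $c_4(1+T)$ with $c_4 = c_4(\omega, D_1)$.

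The estimate is almost entirely a formal manipulation once the right quantity is identified; no genuine analytic difficulty arises at this stage, since all the real work has already been packaged into Proposition \ref{keyprop} (the $L^2$ bound on the multiplier) and into the energy dissipation identity \eqref{bound2}. The one thing one must notice is the exact cancellation of the cross term via the volume-preserving property \eqref{h}: this is precisely what makes the factor $\sqrt{2W(\varphi^\varepsilon)}$ in \eqref{ac} produce the curvature estimate cleanly, and is the structural advantage of \eqref{ac} over \eqref{bb}. I would also double-check the elementary domination $W(\varphi^\varepsilon)/\varepsilon \leq \varepsilon|\nabla \varphi^\varepsilon|^2/2 + W(\varphi^\varepsilon)/\varepsilon$, which is immediate because the gradient term is nonnegative and does not even require the discrepancy negativity of Proposition \ref{negative}.
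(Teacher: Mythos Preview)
Your proof is correct and follows the same route as the paper: rewrite $\Delta\varphi^\varepsilon - W'(\varphi^\varepsilon)/\varepsilon^2 = \varphi^\varepsilon_t - \lambda^\varepsilon \sqrt{2W(\varphi^\varepsilon)}/\varepsilon$ from \eqref{ac}, square, integrate, and bound the two surviving terms via \eqref{bound2} and Proposition~\ref{keyprop}. You are actually more explicit than the paper in one respect: the paper writes the first step as an inequality without comment, whereas you identify that the cross term vanishes exactly by \eqref{h}, which is what makes the step an equality (and not merely a crude $(a-b)^2\le 2a^2+2b^2$ bound).
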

\begin{proof}
By \eqref{ac}, \eqref{bound2} and \eqref{L2est} we have
\begin{equation*}
\begin{split}
&\int _0 ^T \int _{\Omega} \varepsilon \Big( \Delta \varphi ^{\varepsilon} -\frac{W' (\varphi ^{\varepsilon})}{\varepsilon ^2} \Big)^2 \, dx dt\\ 
\leq &\int _0 ^T \int _{\Omega} \varepsilon (\varphi_t ^\varepsilon)^2 \, dx dt + \int _0 ^T \int _{\Omega} \varepsilon \Big( \lambda^\varepsilon \frac{\sqrt{2W(\varphi^\varepsilon)}}{\varepsilon}\Big)^2 \, dx dt\\
\leq &D_1 +\int _0 ^T (\lambda^\varepsilon)^2 \int _{\Omega} \frac{2W(\varphi^\varepsilon)}{\varepsilon} \, dx dt
\leq D_1 + 2D_1\int _0 ^T (\lambda^\varepsilon)^2 dt \\
\leq& D_1(1+2c_1(1+T)),
\end{split}
\end{equation*}
where $\int _{\Omega} \frac{2W(\varphi^\varepsilon)}{\varepsilon} \, dx \leq 2\mu_t^\varepsilon(\Omega) \leq 2 D_1$ is used.
\end{proof}
\begin{remark}\label{remmc}
By \eqref{bound3}, we have
\begin{equation}
\liminf_{\varepsilon \to 0} \int _{\Omega} \varepsilon \Big( \Delta \varphi ^{\varepsilon} -\frac{W' (\varphi ^{\varepsilon})}{\varepsilon ^2} \Big)^2 \, dx <\infty \quad \text{for a.e.} \ t\geq 0.
\label{boundh2}
\end{equation}
Assume that $d=2,3$ and there exists a family of Radon measures $\{ \mu_t \}_{t \in [0,\infty)}$ such that $\mu _t ^\varepsilon \to \mu _t$ for $t\geq 0$. Then by \eqref{bound2}, \eqref{boundh2} and \cite[Theorem 4.1]{roger-schatzle}, we have
\[ \int _{\Omega} |h|^2 \, d\mu _t \leq \liminf_{\varepsilon \to 0} \int _{\Omega} \varepsilon \Big( \Delta \varphi ^{\varepsilon} -\frac{W' (\varphi ^{\varepsilon})}{\varepsilon ^2} \Big)^2 \, dx <\infty \quad \text{for a.e.} \ t\geq 0,\]
where $h$ is the generalized mean curvature for $\mu _t$.
\end{remark}
\begin{remark}\label{difficulty}
For \eqref{rs}, \cite{bronsard-stoth} proved the boundedness of $\sup _{\varepsilon} \int _0 ^T (\lambda _1 ^\varepsilon)^2 \, dt $. But  we need the boundedness of $\sup _{\varepsilon} \varepsilon ^{-1}\int _0 ^T (\lambda _1 ^\varepsilon)^2 \, dt $ to obtain \eqref{bound3} for \eqref{rs}.
\end{remark}
Next we show the monotonicity formula. Define
\[ \rho= \rho _{y,s} (x,t) := \frac{1}{(4\pi (s-t))^{\frac{d-1}{2}}} e^{-\frac{|x-y|^2}{4(s-t)}}, \qquad t<s, \ x,y\in \mathbb{R}^d. \]
To localize the monotonicity formula, we choose a radially symmetric cut-off function $\eta(x) \in C_c ^\infty (B_{\frac{1}{2}} (0))$ with $\eta=1$ on $B_{\frac{1}{4}}(0)$ and $0\leq \eta \leq 1$. Define
\[ \tilde \rho _{(y,s)}(x,t):=\rho _{(y,s)}(x,t) \eta (x-y)=\frac{1}{(4\pi (s-t))^{\frac{d-1}{2}}} e^{-\frac{|x-y|^2}{4(s-t)}}\eta (x-y), \qquad t<s, \ x,y\in \mathbb{R}^d.  \]

\begin{proposition}\label{propmono}
There exists $c_5>0$ depending only on $d$ such that
\begin{equation}
\begin{split}
\int _{\mathbb{R}^d} \tilde \rho \, d\mu ^\varepsilon _t (x)\Big|_{t=t_2} \leq & \Big( \int _{\mathbb{R}^d} \tilde \rho \, d\mu ^\varepsilon _t (x) \Big|_{t=t_1} + c_5 \int _{t_1} ^{t_2}e^{-\frac{1}{128(s-t)}} \mu _t ^\varepsilon (B_{\frac{1}{2}}(y)) \, dt\Big)e^{\int_{t_1} ^{t_2} |\lambda^\varepsilon (t)|^2 \, dt}
\end{split}
\label{monoton}
\end{equation}
for $y\in \mathbb{R}^d$ and $0\leq t_1 <t_2$.
\end{proposition}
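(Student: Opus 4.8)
The plan is to reduce everything to the pointwise differential inequality
\[
\frac{d}{dt}\int_{\mathbb{R}^d}\tilde\rho\,d\mu_t^\varepsilon \le |\lambda^\varepsilon(t)|^2\int_{\mathbb{R}^d}\tilde\rho\,d\mu_t^\varepsilon + c_5\,e^{-\frac{1}{128(s-t)}}\mu_t^\varepsilon(B_{\frac12}(y))
\]
for a.e.\ $t\in(t_1,t_2)$, and then to integrate it by Gronwall's inequality, which produces exactly the factor $e^{\int_{t_1}^{t_2}|\lambda^\varepsilon|^2\,dt}$ and the stated form \eqref{monoton}. Write $e_\varepsilon := \frac{\varepsilon|\nabla\varphi^\varepsilon|^2}{2}+\frac{W(\varphi^\varepsilon)}{\varepsilon}$ and $w^\varepsilon := -\varepsilon\Delta\varphi^\varepsilon+\frac{W'(\varphi^\varepsilon)}{\varepsilon}$, so that $d\mu_t^\varepsilon=\frac1\sigma e_\varepsilon\,dx$ and, by \eqref{ac}, $\varepsilon\varphi_t^\varepsilon=-w^\varepsilon+\lambda^\varepsilon\sqrt{2W(\varphi^\varepsilon)}$. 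The starting observation is the backward heat kernel identity $\partial_t\rho+\Delta\rho=-\frac{\rho}{2(s-t)}$ together with $\nabla\rho=-\frac{x-y}{2(s-t)}\rho$.

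Differentiating $\int\tilde\rho\,e_\varepsilon\,dx$ in $t$ splits it into $\int\partial_t\tilde\rho\,e_\varepsilon$ and $\int\tilde\rho\,\partial_t e_\varepsilon$. For the latter I integrate by parts in space to reach $\int\tilde\rho\,w^\varepsilon\varphi_t^\varepsilon\,dx-\int\varepsilon\varphi_t^\varepsilon\,\nabla\tilde\rho\cdot\nabla\varphi^\varepsilon\,dx$ and substitute $\varepsilon\varphi_t^\varepsilon=-w^\varepsilon+\lambda^\varepsilon\sqrt{2W}$; for the former I use the kernel identity and the purely spatial identity $\partial_{x_i}e_\varepsilon=\sum_j\partial_{x_j}(\varepsilon\varphi^\varepsilon_{x_i}\varphi^\varepsilon_{x_j})+w^\varepsilon\varphi^\varepsilon_{x_i}$ to evaluate $\int\nabla\rho\cdot\nabla e_\varepsilon$. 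Ignoring the cut-off $\eta$ for the moment, all the ``local'' terms should assemble, after substituting $\nabla^2\rho$, into the diffuse Huisken identity
\[
\frac{d}{dt}\int\rho\,e_\varepsilon\,dx = -\int\rho\,\frac{1}{\varepsilon}\Big(w^\varepsilon+\frac{\varepsilon(x-y)\cdot\nabla\varphi^\varepsilon}{2(s-t)}\Big)^2 dx + \frac{1}{2(s-t)}\int\rho\,\xi_\varepsilon\,dx + \lambda^\varepsilon\int\rho\,\frac{\sqrt{2W}\,\tilde w^\varepsilon}{\varepsilon}\,dx,
\]
where $\tilde w^\varepsilon := w^\varepsilon+\frac{\varepsilon(x-y)\cdot\nabla\varphi^\varepsilon}{2(s-t)}$ and $\xi_\varepsilon=\frac{\varepsilon|\nabla\varphi^\varepsilon|^2}{2}-\frac{W(\varphi^\varepsilon)}{\varepsilon}$. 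The first term on the right is a perfect square, hence $\le 0$; the second is $\le 0$ by Proposition \ref{negative}, since $\frac{1}{2(s-t)}>0$.

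The crucial and most delicate point is the nonlocal contribution. Substituting $\varepsilon\varphi_t^\varepsilon=-w^\varepsilon+\lambda^\varepsilon\sqrt{2W}$ produces two $\lambda^\varepsilon$-terms, namely $\lambda^\varepsilon\int\rho\frac{\sqrt{2W}\,w^\varepsilon}{\varepsilon}\,dx$ from $\int\rho\,w^\varepsilon\varphi_t^\varepsilon$, and $-\lambda^\varepsilon\int\sqrt{2W}\,\nabla\rho\cdot\nabla\varphi^\varepsilon\,dx$ from the gradient cross term. Writing $w^\varepsilon=\tilde w^\varepsilon-\frac{\varepsilon(x-y)\cdot\nabla\varphi^\varepsilon}{2(s-t)}$ in the first and inserting $\nabla\rho=-\frac{x-y}{2(s-t)}\rho$ in the second, the two singular $\frac{1}{s-t}$ pieces cancel exactly, leaving only the regular factor $\lambda^\varepsilon\int\rho\frac{\sqrt{2W}\,\tilde w^\varepsilon}{\varepsilon}\,dx$. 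I expect checking this cancellation, and confirming that nothing singular in $(s-t)$ survives, to be the main obstacle.

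To close the estimate, Young's inequality gives $\lambda^\varepsilon\frac{\sqrt{2W}\,\tilde w^\varepsilon}{\varepsilon}\le\frac{(\tilde w^\varepsilon)^2}{2\varepsilon}+\frac{|\lambda^\varepsilon|^2 W(\varphi^\varepsilon)}{\varepsilon}$; integrated against $\rho$, the first half is absorbed by the negative square term and the second is $\le|\lambda^\varepsilon|^2\int\rho\,e_\varepsilon$, which after dividing by $\sigma$ produces the term $|\lambda^\varepsilon|^2\int\rho\,d\mu_t^\varepsilon$. Reinstating the cut-off, since $\tilde\rho=\rho\,\eta(x-y)$ the integrations by parts generate additional terms each carrying a factor $\nabla\eta$ or $\Delta\eta$, all supported in the annulus $B_{\frac12}(y)\setminus B_{\frac14}(y)$; there $|x-y|\ge\frac14$, so absorbing the polynomial prefactors in $|x-y|/(s-t)$ into half of the Gaussian yields a bound $c_5\,e^{-|x-y|^2/(8(s-t))}\le c_5\,e^{-1/(128(s-t))}$, integrated against $\mu_t^\varepsilon$ on $B_{\frac12}(y)$. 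This gives the target differential inequality, and Gronwall's inequality then delivers \eqref{monoton}.
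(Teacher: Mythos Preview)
Your proposal is correct and follows essentially the same route as the paper: differentiate $\int\tilde\rho\,e_\varepsilon$, integrate by parts, complete the square with $L^\varepsilon+\nabla\tilde\rho\cdot\nabla\varphi^\varepsilon/\tilde\rho$ (your $\tilde w^\varepsilon/\varepsilon$ up to sign), apply Young to the $\lambda^\varepsilon$ cross term, drop the nonpositive square and discrepancy terms, control the cut-off errors by the Gaussian tail, and finish with Gronwall. The only cosmetic differences are that the paper carries the cut-off $\tilde\rho$ through the computation from the start rather than first deriving the identity for $\rho$ and then perturbing, and that the paper's Young split yields a coefficient $\tfrac12$ on $|\lambda^\varepsilon|^2$ while yours gives $1$---both suffice for the stated bound.
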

\begin{proof}
First we show
\begin{equation}
\begin{split}
\frac{d}{dt} \int _{\mathbb{R}^d}\tilde \rho \, d\mu_t ^\varepsilon\leq \frac{1}{2(s-t)}\int _{\mathbb{R}^d}\tilde \rho \, d\xi _t ^\varepsilon
+\frac{1}{2} (\lambda^\varepsilon) ^2 \int _{\mathbb{R}^d} \tilde\rho \, d\mu_t ^\varepsilon +c_5 e^{-\frac{1}{128(s-t)}} \mu _t ^\varepsilon (B_{\frac{1}{2}} (y)).
\end{split}
\label{mono5}
\end{equation}
Define
\[ L^\varepsilon:=\varphi ^\varepsilon _t -\lambda^\varepsilon \frac{\sqrt{2W(\varphi^\varepsilon)}}{\varepsilon} = \Delta \varphi^\varepsilon -\frac{W'(\varphi ^\varepsilon)}{\varepsilon ^2} .\] 
Let $e_\varepsilon :=\frac{\varepsilon |\nabla \varphi ^\varepsilon|^2}{2}+ \frac{W(\varphi ^\varepsilon)}{\varepsilon}$. By integration by parts we obtain
\begin{equation}
\begin{split}
&\frac{d}{dt} \int _{\mathbb{R}^d} e_{\varepsilon}\tilde \rho \, dx 
= \int _{\mathbb{R}^d}\{ e_\varepsilon \tilde\rho _t -\varepsilon \Big(L^\varepsilon+\lambda^\varepsilon \frac{\sqrt{2W(\varphi^\varepsilon)}}{\varepsilon}\Big)(\nabla \tilde\rho \cdot \nabla \varphi^\varepsilon +\tilde\rho L^\varepsilon) \} \, dx \\
=& \int _{\mathbb{R}^d} \Big\{ e_{\varepsilon} \tilde\rho _t -\varepsilon\tilde \rho \Big(L^\varepsilon+\frac{\nabla\tilde\rho \cdot \nabla \varphi^\varepsilon  }{\tilde\rho } \Big)^2 +\varepsilon \Big( L^\varepsilon\nabla \tilde\rho \cdot \nabla \varphi ^\varepsilon+\frac{(\nabla\tilde \rho \cdot \nabla \varphi^\varepsilon)^2}{\rho} \Big)\\
&\qquad -\varepsilon \tilde\rho \lambda \frac{\sqrt{2W(\varphi^\varepsilon)}}{\varepsilon} \Big(L^\varepsilon+\frac{\nabla\tilde\rho \cdot \nabla \varphi ^\varepsilon }{\tilde\rho } \Big) \Big\} \, dx \\
\leq & \int _{\mathbb{R}^d} \Big\{ e_\varepsilon \tilde\rho _t +\varepsilon \Big( L^\varepsilon\nabla\tilde \rho \cdot \nabla \varphi ^\varepsilon+\frac{(\nabla \tilde\rho \cdot \nabla \varphi^\varepsilon)^2}{\rho} \Big)
+\frac{1}{4} \varepsilon \tilde\rho \Big(\lambda ^\varepsilon\frac{\sqrt{2W(\varphi^\varepsilon)}}{\varepsilon} \Big)^2 \Big\} \, dx .
\end{split}
\label{mono1}
\end{equation}
Furthermore by integration by parts we have
\begin{equation}
\begin{split}
\int_{\mathbb{R}^d} \varepsilon L^\varepsilon \nabla \tilde\rho \cdot\nabla \varphi^\varepsilon \, dx
= \int  _{\mathbb{R}^d} -\varepsilon (\nabla \varphi^\varepsilon\otimes \nabla \varphi^\varepsilon) : \nabla ^2\tilde \rho + e_{\varepsilon} \Delta \tilde\rho \, dx.
\end{split}
\label{mono2}
\end{equation}
Substitution of \eqref{mono1} into \eqref{mono2} gives
\begin{equation}
\begin{split}
\frac{d}{dt} \int _{\mathbb{R}^d} e_{\varepsilon}\tilde \rho \, dx \leq & \int _{\mathbb{R}^d}(-\xi _\varepsilon)(\partial_t \tilde \rho+\Delta \tilde \rho) +\varepsilon |\nabla \varphi^\varepsilon|^2 \Big( \partial_t \tilde \rho+\Delta \tilde \rho - \frac{\nabla \varphi^\varepsilon\otimes \nabla \varphi^\varepsilon}{|\nabla\varphi^\varepsilon|^2} : \nabla ^2\tilde \rho\\ &+\frac{(\nabla\tilde\rho\cdot \nabla\varphi^\varepsilon)^2}{\tilde\rho|\nabla\varphi^\varepsilon|^2} \Big)\,dx
+\frac{1}{2} (\lambda^\varepsilon) ^2 \int _{\mathbb{R}^d} e_\varepsilon \tilde\rho \, dx .
\end{split}
\label{mono4}
\end{equation}

Note that $\rho $ (without multiplication $\eta$) satisfies the following:
\begin{equation}
\begin{split}
\rho _t +\Delta \rho = -\frac{\rho}{2(s-t)} ,\qquad \rho _t +\Delta \rho -\frac{\nabla \varphi ^\varepsilon\otimes \nabla \varphi ^\varepsilon}{|\nabla \varphi^\varepsilon |^2} :\nabla ^2 \rho + \frac{(\nabla \rho \cdot \nabla \varphi^\varepsilon )^2}{\rho |\nabla \varphi ^\varepsilon|^2} =0.
\end{split}
\label{mono3}
\end{equation}
When one computes \eqref{mono3} with $\tilde \rho $ instead of $\rho$, we obtain additional term coming from differentiation of $\eta$. Note that $|\nabla ^j \rho|\leq C(j,d) e^{-\frac{1}{128(s-t)}} $ for any $x,y\in \mathbb{R}^d$ with $|x-y|>\frac{1}{4}$ and $j=0,1$. Hence the integration of these terms can be estimated by $C\mu _t ^\varepsilon (B_{\frac{1}{2}} (y)) e^{-\frac{1}{128(s-t)}}$ for $C=C(d)>0$. Thus we obtain \eqref{mono5}. By \eqref{mono5}, Proposition \ref{negative} and Gronwall's inequality we have \eqref{monoton}.
\end{proof}

Next we prove the upper density ratio bounds of $\mu _t ^{\varepsilon}$.
\begin{proposition}\label{propdens}
There exists $c_6=c_6(d)>0$ such that
\begin{equation}
\frac{\mu _t ^{\varepsilon} (B_R(x))}{ R^{d-1} }\leq c_6 D_1 ( 1 +  t )e^{c_1 (1+t)}
\label{density}
\end{equation}
for $(x,t) \in \Omega \times [0,\infty)$, $\varepsilon \in (0,\epsilon_1)$ and $0<R<\frac{1}{4}$.
\end{proposition}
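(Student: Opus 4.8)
The plan is to read off the density ratio bound from the localized monotonicity formula \eqref{monoton} by scaling the back-in-time parameter $s$ to the radius $R$. Fix $(x,t)\in\Omega\times[0,\infty)$ with $t>0$ and $0<R<\tfrac14$; the borderline case $t=0$ follows at once from \eqref{bound}, since there $\mu_0^\varepsilon(B_R(x))/R^{d-1}\le \omega_{d-1}D_1$. In \eqref{monoton} I would take the center $y=x$, the times $t_1=0$, $t_2=t$, and set $s:=t+R^2$, so that $s-t_2=R^2$. Then $\tilde\rho_{(x,s)}(\cdot,t)$ is a Gaussian concentrated at scale $R$ around $x$, and because $R<\tfrac14$ the cut-off obeys $\eta(z-x)=1$ for all $z\in B_R(x)$.

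For the left-hand side I would keep only the mass on $B_R(x)$: there $|z-x|^2/(4(s-t))=|z-x|^2/(4R^2)<\tfrac14$, so that
\[
\int_{\mathbb{R}^d}\tilde\rho\,d\mu_t^\varepsilon \;\ge\; \frac{e^{-1/4}}{(4\pi R^2)^{(d-1)/2}}\,\mu_t^\varepsilon(B_R(x)) \;=\; \frac{e^{-1/4}}{(4\pi)^{(d-1)/2}}\,\frac{\mu_t^\varepsilon(B_R(x))}{R^{d-1}}.
\]
This shows the weighted integral controls precisely the density ratio to be estimated.

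For the right-hand side there are three contributions. The initial term $\int_{\mathbb{R}^d}\tilde\rho_{(x,s)}(\cdot,0)\,d\mu_0^\varepsilon$ is a heat kernel at scale $\sqrt{t+R^2}$ tested against $\mu_0^\varepsilon$; I would split $\mathbb{R}^d$ into $B_{\sqrt{s}}(x)$ and the dyadic annuli $\{2^k\sqrt{s}\le|z-x|<2^{k+1}\sqrt{s}\}$ and apply the $(d-1)$-density bound \eqref{bound} on each piece, so that the Gaussian decay sums to a convergent geometric series and yields $\int\tilde\rho_{(x,s)}(\cdot,0)\,d\mu_0^\varepsilon\le C(d)D_1$. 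For the error term, periodicity of $\varphi^\varepsilon$ puts $B_{1/2}(x)$ inside a unit cube, whence $\mu_\tau^\varepsilon(B_{1/2}(x))\le\mu_\tau^\varepsilon(\Omega)\le D_1$ by \eqref{bound2}; bounding the weight $e^{-1/(128(s-\tau))}\le 1$ then gives $c_5\int_0^t e^{-1/(128(s-\tau))}\mu_\tau^\varepsilon(B_{1/2}(x))\,d\tau\le c_5 D_1 t$. Finally Proposition \ref{keyprop} bounds the Gronwall prefactor by $e^{\int_0^t|\lambda^\varepsilon|^2\,d\tau}\le e^{c_1(1+t)}$ for $\varepsilon\in(0,\epsilon_1)$. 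Combining these, the right-hand side of \eqref{monoton} is at most $C(d)D_1(1+t)e^{c_1(1+t)}$, and rearranging against the lower bound gives \eqref{density} with $c_6=c_6(d)$.

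The routine but indispensable step is the initial-time estimate $\int\tilde\rho_{(x,s)}(\cdot,0)\,d\mu_0^\varepsilon\le C(d)D_1$, which is exactly where the uniform $(d-1)$-density upper bound \eqref{bound} on the approximating data enters. The genuine structural obstacle, however, has already been removed by Proposition \ref{keyprop}: without the uniform $L^2$ bound on $\lambda^\varepsilon$, the exponential factor in \eqref{monoton} could not be controlled uniformly in $\varepsilon$, and no $\varepsilon$-independent density bound would follow.
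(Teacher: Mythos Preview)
Your argument is correct and follows essentially the same route as the paper: apply the localized monotonicity formula \eqref{monoton} with $t_1=0$, $t_2=t$, center $y=x$, choose $s-t$ proportional to $R^2$, lower-bound the left side on $B_R(x)$, bound the initial term by the density hypothesis \eqref{bound}, the error term by $\mu_\tau^\varepsilon(\Omega)\le D_1$, and the exponential factor by Proposition~\ref{keyprop}. The only cosmetic differences are your choice $s=t+R^2$ versus the paper's $s=t+R^2/4$, and your dyadic annulus decomposition of the initial Gaussian integral versus the paper's layer-cake formula---both yield a constant $C(d)D_1$.
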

\begin{proof}
We compute that
\begin{equation}
\begin{split}
&\int _{\mathbb{R}^d}\tilde\rho _{y,s}(x,0) \, d\mu _0 ^{\varepsilon}(x) \leq \frac{1}{(4\pi s)^{\frac{d-1}{2}} } \int _{\mathbb{R}^d} e^{-\frac{|x-y|^2}{4s}} \, d\mu _0 ^{\varepsilon} (x)\\
=& \frac{1}{(4\pi s)^{\frac{d-1}{2}} } \int _0 ^1 \mu _0 ^{\varepsilon}( \{ x \, | \, e^{-\frac{|x-y|^2}{4s}}>k \}) \, dk 
= \frac{1}{(4\pi s)^{\frac{d-1}{2}} } \int _0 ^1 \mu _0 ^{\varepsilon}( B_{\sqrt{4s\log k^{-1}}} (y)) \, dk\\
\leq &  \frac{1}{(4\pi s)^{\frac{d-1}{2}} } \int _0 ^1 D_1 \omega_{d-1} (\sqrt{4s\log k^{-1}})^{d-1}  \, dk \leq c_7 D_1,
\end{split}
\label{density0}
\end{equation}
where $c_7 >0$ is depending only on $d$ and the density upper bound \eqref{bound} is used. By \eqref{bound2}, \eqref{monoton} and \eqref{density0}, we have
\begin{equation}
\begin{split}
&\int _{\mathbb{R}^d} \tilde \rho_{y,s} (x,t) \, d\mu ^\varepsilon _t (x) \\
\leq& \Big( \int _{\mathbb{R}^d} \tilde \rho_{y,s}(x,0) \, d\mu ^\varepsilon _0 (x) + c_5 \int _{0} ^{t}e^{-\frac{1}{128(s- \tau )}} \mu _\tau ^\varepsilon (B_{\frac{1}{2}}(y)) \, d\tau \Big)e^{\int_{0} ^{t} |\lambda^\varepsilon (\tau)|^2 \, d\tau}\\
\leq& \Big( c_7 D_1 + c_5 D_1 t \Big)e^{c_1 (1+t)}
\end{split}
\label{density1}
\end{equation}
for any $0<t<s$ and $y\in\Omega$. Fix $R\in(0,\frac{1}{4})$ and set $s=t+\frac{R^2}{4}$. Then
\begin{equation}
\begin{split}
&\int _{\mathbb{R}^d}\tilde \rho _{y,s}(x,t) \, d\mu _t ^{\varepsilon}(x)=\int _{\mathbb{R}^d} \frac{1}{\pi ^{\frac{d-1}{2}} R^{d-1}} e^{-\frac{|x-y|^2}{R^2}} \eta(x-y) \, d \mu _t ^{\varepsilon}(x)\\
\geq &\int _{B_R (y)} \frac{1}{\pi ^{\frac{d-1}{2}} R^{d-1}} e^{-\frac{|x-y|^2}{R^2}} \, d \mu _t ^{\varepsilon}
\geq  \int _{B_R (y)} \frac{1}{\pi ^{\frac{d-1}{2}} R^{d-1}} e^{-1} \, d \mu _t ^{\varepsilon}=\frac{1}{e \pi ^{\frac{-1}{2}} R^{d-1}}\mu _t^{\varepsilon} (B_R (y)),
\end{split}
\label{density2}
\end{equation}
where $\eta (x-y)=1$ on $B_R(y)$ is used. By \eqref{density1} and \eqref{density2} we obtain \eqref{density}.
\end{proof}
\section{Existence of $L^2$-flow}
To prove Theorem \ref{mainresults}, we use the following theorem:
\begin{theorem}[\cite{mugnai-roger2011}]\label{mr}
Let $d=2,3$ and $\varphi^\varepsilon$ be a solution for the following equation:
\begin{equation}
\left\{ 
\begin{array}{ll}
\varepsilon \varphi ^{\varepsilon} _t =\varepsilon \Delta \varphi ^{\varepsilon} -\dfrac{W' (\varphi^{\varepsilon})}{\varepsilon }+g^\varepsilon ,& (x,t)\in \Omega \times (0,\infty).  \\
\varphi ^{\varepsilon} (x,0) = \varphi_0 ^{\varepsilon} (x) ,  &x\in \Omega.
\end{array} \right.
\label{acmr}
\end{equation}
We assume that there exists $\epsilon >0$ such that
\begin{equation*}
\sup_{\varepsilon \in (0,\epsilon)} \Big( \mu _0 ^\varepsilon (\Omega) + \int _{\Omega \times (0,T)} \frac{1}{\varepsilon} (g^\varepsilon)^2 \, dxdt\Big) <\infty
\end{equation*}
for any $T>0$. Then there exits a subsequence $\varepsilon \to0$ such that the following hold:
\begin{enumerate}
\item There exists a family of $(d-1)$-integral Radon measures $\{ \mu _t \}_{t\in [0,\infty)}$ on $\mathbb{R}^d$ such that
\begin{enumerate}
\item $\mu ^\varepsilon \to \mu$ as Radon measures on $\mathbb{R}^d \times [0,\infty)$, where $d\mu =d\mu_t dt$.
\item $\mu ^\varepsilon _t \to \mu _t$ as Radon measures on $\mathbb{R}^d$ for any $t \in [0,\infty)$.
\end{enumerate}
\item There exists $g \in L^2 (\mu;\mathbb{R}^d)$ such that
\begin{equation}
\begin{split}
\lim _{\varepsilon \to 0} \frac{1}{\sigma}\int _{\mathbb{R}^d \times (0,\infty)} -g^\varepsilon\nabla \varphi^\varepsilon \cdot \Phi  \, dxdt = \int _{\mathbb{R}^d \times (0,\infty)} g \cdot \Phi  \, d\mu
\end{split}
\label{mr2}
\end{equation}
for any $\Phi \in C_c (\mathbb{R}^d \times[0,\infty);\mathbb{R}^d)$.
\item $\{ \mu _t \}_{t\in (0,\infty)}$ is a $L^2$-flow with a generalized velocity vector $v=h+g$ and
\begin{equation*}
\lim _{\varepsilon \to 0} \int _{\mathbb{R}^d \times (0,\infty)}  v^\varepsilon  \cdot \Phi \, d\mu ^\varepsilon = \int _{\mathbb{R}^d \times (0,\infty)} v \cdot \Phi \, d\mu  
\end{equation*}
for any $\Phi \in C_c (\mathbb{R}^d \times[0,\infty);\mathbb{R}^d)$, where
\[
  v^\varepsilon := \begin{cases}
    \frac{-\varphi^\varepsilon _t}{|\nabla\varphi^\varepsilon|} \frac{\nabla\varphi^\varepsilon}{|\nabla\varphi^\varepsilon|} & \text{if} \ |\nabla\varphi^\varepsilon|\not=0 ,\\
    0 & \text{otherwise}.
  \end{cases}
\]
\end{enumerate}
\end{theorem}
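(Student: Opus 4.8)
The plan is to follow the standard singular-limit strategy for the Allen--Cahn equation: energy estimates, weak-$*$ compactness, the diffuse-interface integrality theory, and a weak passage to the limit in the motion law. First I would extract the energy estimates from \eqref{acmr}. Multiplying the equation by $\varphi_t^\varepsilon$ and integrating by parts gives the dissipation identity $\int_\Omega \varepsilon(\varphi_t^\varepsilon)^2\,dx + \frac{d}{dt}\sigma\mu_t^\varepsilon(\Omega) = \int_\Omega g^\varepsilon\varphi_t^\varepsilon\,dx$, and Young's inequality $\int g^\varepsilon\varphi_t^\varepsilon \le \frac12\int\frac1\varepsilon(g^\varepsilon)^2 + \frac12\int\varepsilon(\varphi_t^\varepsilon)^2$ together with the hypothesis yields the uniform bounds $\sup_{t,\varepsilon}\mu_t^\varepsilon(\Omega)\le C$ and $\sup_\varepsilon\int_0^T\int_\Omega\varepsilon(\varphi_t^\varepsilon)^2\,dxdt\le C$. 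Rewriting the equation as $\Delta\varphi^\varepsilon - W'(\varphi^\varepsilon)/\varepsilon^2 = \varphi_t^\varepsilon - g^\varepsilon/\varepsilon$, these bounds give $\int_0^T\int_\Omega\varepsilon(\Delta\varphi^\varepsilon - W'(\varphi^\varepsilon)/\varepsilon^2)^2\,dxdt\le C$, which is the key estimate for integrality (exactly the quantity appearing in Remark \ref{remmc}).

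From the mass bound and weak-$*$ compactness of Radon measures I obtain a subsequence with $\mu^\varepsilon\to\mu$ on $\R^d\times[0,\infty)$, and disintegration gives $d\mu=d\mu_t\,dt$, proving (1)(a). To upgrade to the pointwise-in-time statement (1)(b), I would compute $\frac{d}{dt}\mu_t^\varepsilon(\phi)=\frac1\sigma\int[-\varepsilon\phi(\Delta\varphi^\varepsilon-W'(\varphi^\varepsilon)/\varepsilon^2)-\varepsilon\nabla\phi\cdot\nabla\varphi^\varepsilon]\varphi_t^\varepsilon\,dx$ and bound it, via Cauchy--Schwarz, by the two $L^2$-in-spacetime quantities above; this makes $t\mapsto\mu_t^\varepsilon(\phi)$ uniformly of bounded variation in time, so a Helly-type selection over a countable dense family of $\phi\in C_c^1(\R^d;\R^+)$ together with a diagonal argument produces $\mu_t$ with $\mu_t^\varepsilon\to\mu_t$ for every $t$. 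The $(d-1)$-integrality of $\mu_t$ for a.e. $t$ is the deepest point: having $\int_\Omega\varepsilon(\Delta\varphi^\varepsilon-W'(\varphi^\varepsilon)/\varepsilon^2)^2\,dx<\infty$ for a.e. $t$ by Fatou, I would invoke the R{\"o}ger--Sch{\"a}tzle integrality theorem (valid precisely for $d=2,3$, cf.\ Remark \ref{remmc}), which furnishes $h\in L^2(\mu_t)$ and, crucially, the vanishing of the discrepancy and the asymptotic equipartition of energy.

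For part (2), the hypothesis $\sup_\varepsilon\int\frac1\varepsilon(g^\varepsilon)^2<\infty$ makes the vector measures $-\frac1\sigma g^\varepsilon\nabla\varphi^\varepsilon\,dxdt$ have bounded mass against any $\Phi\in C_c$: by Cauchy--Schwarz and $|\nabla\varphi^\varepsilon|^2\le 2e_\varepsilon/\varepsilon$ one estimates $|\frac1\sigma\int g^\varepsilon\nabla\varphi^\varepsilon\cdot\Phi|\le\frac1\sigma(\int\frac1\varepsilon(g^\varepsilon)^2)^{1/2}(2\sigma\int|\Phi|^2\,d\mu_t^\varepsilon)^{1/2}$. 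Hence a limit exists, is absolutely continuous with respect to $\mu$, and lower semicontinuity of the $L^2$ norm gives a density $g\in L^2(\mu)$ satisfying \eqref{mr2}. Finally, for (3), I set $v=h+g$ and verify the two conditions in the definition of $L^2$-flow: testing \eqref{acmr} against $\nabla\varphi^\varepsilon\cdot\Phi$ yields the diffuse first-variation identity, and passing to the limit using the mass convergence, the vanishing discrepancy, and the $L^2$ bounds identifies the normal velocity with $h+g$, establishes the convergence $\int v^\varepsilon\cdot\Phi\,d\mu^\varepsilon\to\int v\cdot\Phi\,d\mu$, and gives the orthogonality \eqref{velo1} and the integral bound \eqref{velo2}. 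The main obstacle is the integrality step coupled with control of the discrepancy measure: since discrepancy negativity is not assumed in the general setting of this theorem, one must extract the vanishing of $\xi_t^\varepsilon$ from the $L^2$ mean-curvature estimate, and this is exactly where the restriction $d=2,3$ and the R{\"o}ger--Sch{\"a}tzle machinery are essential.
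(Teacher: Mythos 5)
You should note at the outset that the paper does not actually prove Theorem \ref{mr}: it is imported verbatim from \cite{mugnai-roger2011} (with only a remark that the periodic case follows as in the Neumann case), and your outline is a faithful reconstruction of exactly that cited proof strategy --- dissipation identity and $L^2$ bound on $\varepsilon\big(\Delta\varphi^\varepsilon - W'(\varphi^\varepsilon)/\varepsilon^2\big)^2$, BV-in-time plus Helly selection for the pointwise-in-time convergence, R{\"o}ger--Sch{\"a}tzle integrality (the reason for $d=2,3$), a Riesz-representation argument for $g\in L^2(\mu;\mathbb{R}^d)$, and a Hutchinson-type compactness argument for the velocity. Your sketch also aligns with the paper's own supplementary material: your treatment of part (3) is essentially Proposition \ref{prvelo2} together with Remark \ref{remmc}, including the correct observation that, absent a sign on the discrepancy, its vanishing must be extracted from the $L^2$ mean-curvature bound via \cite[Proposition 4.9]{roger-schatzle} rather than from a maximum principle, which is the key estimate behind \eqref{velo2}.
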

\begin{remark}
The boundary conditions of \eqref{acmr} of the original theorem is Neumann conditions. But by an argument similar to the proof, we also obtain same results for periodic boundary conditions (see \cite[Remark 2.3]{mugnai-roger2008}). 
\end{remark}

\begin{proof}[Proof of Theorem \ref{mainresults}]
Set $g^\varepsilon := \lambda ^\varepsilon \sqrt{2W(\varphi ^\varepsilon)}$. By \eqref{bound2} and \eqref{L2est} we have \eqref{thmc} and
\begin{equation}
\begin{split}
\sup _{\varepsilon \in (0,\epsilon_1)} \int _0 ^T \int _{\Omega} \frac{1}{\varepsilon} (g^\varepsilon )^2 \, dxdt
= \sup _{\varepsilon \in (0,\epsilon_1)} \int _0 ^T  (\lambda ^\varepsilon )^2 \int _{\Omega} \frac{2W(\varphi ^\varepsilon)}{\varepsilon} \, dxdt 
\leq  2c_1D_1(1+T).
\end{split}
\label{L2est2}
\end{equation}
By \eqref{bound} and \eqref{L2est2}, $\varphi ^\varepsilon$ satisfy all the assumptions of Theorem \ref{mr}. Then (a) of Theorem \ref{mainresults} holds and $\{ \mu_t \}_{t\in[0,\infty)}$ is a $L^2$-flow with a generalized velocity vector $v=h+g$ with \eqref{limitlambda}. As a supplement, we show the key estimate \eqref{velo2} directly in Proposition \ref{prvelo2}. So we only need to prove (b), \eqref{generalizedvelocity2} and (f).

Next we prove (b). Set $w^i := G \circ \varphi ^i$, where $G  (s) := \sigma ^ {-1} \int _{-1 } ^s \sqrt{2W (y) } \, dy $ and $\varphi ^i := \varphi^{\varepsilon _i}$. Note that $G (-1) =0$ and $G (1) =1$. We compute that
\[ |\nabla w^i |=\sigma ^{-1} |\nabla \varphi ^i| \sqrt{2W (\varphi ^i) } \leq \sigma ^{-1} \Big( \frac{\varepsilon_i |\nabla \varphi ^i |^2 }{2} +\frac{W (\varphi ^i)}{\varepsilon_i} \Big). \]
Hence by \eqref{bound2} we have
\begin{equation}
\int _{\Omega} |\nabla w^i (\cdot,t) | \, dx \leq \int _{\Omega} \sigma ^{-1} \Big( \frac{\varepsilon_i |\nabla \varphi ^i |^2 }{2} +\frac{W (\varphi ^i)}{\varepsilon_i} \Big) \, dx = \mu _t ^{\varepsilon_i} (\Omega) \leq D_1
\label{bv1}
\end{equation}
for $t\geq 0$. Fix $T>0$. By the similar argument and \eqref{bound2} we obtain
\begin{equation}
\begin{split}
\int _0 ^T \int_{\Omega} |\partial _t w^i| \, dxdt \leq \sigma ^{-1} \int _0 ^T \int_{\Omega} \Big( \frac{\varepsilon_i |\partial _t \varphi ^i |^2 }{2} +\frac{W (\varphi ^i)}{\varepsilon_i} \Big) \, dxdt \leq D_1(1+T).
\end{split}
\label{bv2}
\end{equation}
By \eqref{bv1} and \eqref{bv2}, $\{ w^i \}_{i=1} ^\infty$ is bounded in $BV  (\Omega \times [0,T])$. By the standard compactness theorem and the diagonal argument there is subsequence $\{ w^i \}_{i=1} ^\infty$ (denoted by the same index) and $w\in BV_{loc} (\Omega \times [0,\infty))$ such that
\begin{equation}
w^i \to w \quad \text{in} \ L^1 _{loc} (\Omega \times [0,\infty))
\label{wconv}
\end{equation}
and a.e. pointwise. We denote $\psi(x,t) := \lim _{i\to \infty} (1+G^{-1} \circ w^i (x,t))/2$. Then we have
\[ \varphi ^i \to 2\psi -1 \quad \text{in} \ L^1 _{loc} (\Omega \times [0,\infty)) \]
and a.e. pointwise. Hence we obtain (b1). By Proposition \ref{prop1} (3) we obtain (b2). We have $\varphi ^i \to \pm 1$ a.e. and $\psi=1$ or $=0$ a.e. on $\Omega \times [0,\infty)$ by the boundedness of $\int _{\Omega} \frac{W (\varphi ^i) }{\varepsilon_i} \, dx$. Moreover $\psi =w$ a.e. on $\Omega \times [0,\infty)$. Thus $\psi \in BV_{loc} (\Omega \times [0,\infty))$. For any open set $U \subset \Omega$ and a.e. $0\leq t_1 < t_2<T$ we have
\begin{equation}
\begin{split}
&\int_U |\psi (\cdot ,t_2) -\psi (\cdot ,t_1)| \, dx \leq\lim _{i\to \infty} \int _{\Omega} |w^i (\cdot ,t_2) -w^i (\cdot ,t_1)| \, dx \\
\leq & \liminf _{i \to \infty} \int_\Omega \int _{t_1} ^{t_2} |\partial _t w^i| \, dtdx \leq \liminf_{i\to \infty} \int_{\Omega} \int _{t_1} ^{t_2} \Big( \frac{\varepsilon_i |\partial _t \varphi ^i |^2 }{2}\sqrt{t_2 -t_1} +\frac{W (\varphi ^i)}{\varepsilon_i \sqrt{t_2 -t_1}} \Big) \, dtdx \\
\leq & C_2 D_1 \sqrt{t_2 -t_1},
\end{split}
\label{holder}
\end{equation}
where $C_2= C_2 (n,T)>0$. By \eqref{holder} and $\mathcal{L}^d (U_0)<\infty$, $\psi (\cdot ,t) \in L^1 (\mathbb{R}^d)$ for a.e. $t\geq 0$. By this and \eqref{holder}, we may define $\psi (\cdot ,t)$ for any $t\geq 0$ such that $\psi\in C^{\frac{1}{2}} _{loc} ([0,\infty) ; L^1 (\Omega))$. Moreover by (b2), \eqref{h} and $\psi=1 $ or $=0$ a.e. on $\Omega\times [0,\infty)$ we have $\int _{\Omega} \psi (\cdot,t) \, dx =\mathcal{L}^d (U_0)$ for any $t\geq 0$. Hence we obtain (b3). For $\phi \in C_c (\Omega ;\mathbb{R}^+)$ and $t\geq 0$ we compute that
\begin{equation*}
\begin{split}
&\int _{\Omega} \phi \, d\| \nabla \psi(\cdot , t) \| \leq \liminf _{i \to \infty} \int _{\Omega} \phi |\nabla w^i | \, dx\\
\leq & \lim _{i\to \infty}\sigma ^{-1} \int _{\Omega} \phi \Big( \frac{\varepsilon_{i} |\nabla \varphi ^i |^2 }{2} +\frac{W (\varphi ^i)}{\varepsilon_{i}} \Big) \, dx = \int _{\Omega} \phi \, d\mu _t.
\end{split}
\end{equation*}
Hence we obtain (b4).

Next we prove \eqref{generalizedvelocity2}. By \eqref{mr2}, for any $\Phi \in C_c ^1 (\mathbb{R}^d \times[0,\infty);\mathbb{R}^d)$ we have
\begin{equation}
\begin{split}
&\int _{\Omega \times (0,\infty)} g \cdot \Phi  \, d\mu=\lim _{i\to \infty} \frac{1}{\sigma}\int _{\Omega \times (0,\infty)} -\lambda^i \sqrt{2W(\varphi^i)}\nabla \varphi^i \cdot \Phi  \, dxdt \\
=& \lim _{i\to \infty} \frac{1}{\sigma}\int _{\Omega \times (0,\infty)} -\lambda^i \nabla k(\varphi^i) \cdot \Phi  \, dxdt
=\lim _{i\to \infty} \frac{1}{\sigma}\int _{\Omega \times (0,\infty)}\lambda^i k(\varphi^i)  \div \Phi \, dxdt.
\end{split}
\label{rn}
\end{equation}
Set $\varphi:=\lim_{i \to \infty}\varphi ^i$. By \eqref{rn}, the Radon-Nikodym theorem and 
\[ \lim_{i\to \infty} k(\varphi ^i)=\lim_{i\to \infty} \int_0 ^{\varphi^i} \sqrt{2W(s)} \, ds=\frac{\sigma}{2} \varphi =\sigma\Big(\psi-\frac{1}{2}\Big) \quad\text{ a.e. on } \ \Omega \times (0,\infty),\]
we obtain
\begin{equation}
\begin{split}
&\int _{\Omega \times (0,\infty)} g \cdot \Phi  \, d\mu
=\int_{ 0}^\infty \lambda \int _{\Omega} \Big(\psi-\frac{1}{2}\Big)\div \Phi \, dxdt
=\int_{ 0}^\infty \lambda \int _{\Omega} \psi\div \Phi \, dxdt\\
&=-\int_{ 0}^\infty \lambda \int _{\Omega} \nu\cdot  \Phi  \, d\| \nabla \psi(\cdot,t)\|dt
=\int _{\Omega\times (0,\infty)} - \lambda \frac{d\|\nabla\psi(\cdot,t)\|}{d\mu_t} \nu  \cdot \Phi \, d\mu
\end{split}
\label{limitg}
\end{equation}
for any $\Phi \in C_c ^1 (\mathbb{R}^d \times[0,\infty);\mathbb{R}^d)$, where $\nu$ is the inner normal vector of $\{ \psi(\cdot,t)=1 \}$ on $\partial ^\ast \{ \psi(\cdot,t)=1\}$. Set $\theta : \partial ^\ast \{ \psi = 1 \}\to (0,\infty)$ by $\theta := \Big( \frac{d\|\nabla\psi(\cdot,t)\|}{d\mu_t} \Big)^{-1}$. By the integrality of $\mu_t$, $\theta\in\mathbb{N}$ $\mathcal{H}^d$-a.e. Hence we have \eqref{generalizedvelocity2}.

Finally we prove (f). By \cite[Proposition 4.5]{mugnai-roger2008} we have
\begin{equation}
\int _0 ^T \int_{\mathbb{R}^d} v \cdot \nu \eta \, d\|\nabla \varphi (\cdot, t) \| dt=-\int _0 ^T \int_{\mathbb{R}^d} \varphi \eta _t \, dxdt
\label{peri}
\end{equation}
for any $T>0$ and $\eta \in C_c ^1 (\mathbb{R}^d \times (0,T))$. By \eqref{peri}, (b3) and the periodic boundary condition, 
\begin{equation}
\int _0 ^T \zeta \int_{\Omega} v \cdot \nu  \, d\|\nabla \varphi (\cdot, t) \| dt=-\int _0 ^T \zeta _t \int_{\Omega} \varphi  \, dxdt = -(2\mathcal{L}^d (U_0) -1) \int _0 ^T \zeta _t \, dt=0 
\label{peri2}
\end{equation}
holds for any $\zeta \in C_c ^1 ((0,T))$. 
By \eqref{peri2} and $\|\nabla \psi (\cdot,t)\|=\frac{1}{2} \|\nabla \varphi (\cdot,t)\|$ for any $t\geq0$, we have
\begin{equation*} 
\begin{split}
&\int _{0} ^{T} \zeta \int _{\Omega} v \cdot \nu \, d\|\nabla \psi (\cdot,t)\| dt
=\frac{1}{2}\int _{0} ^{T} \zeta \int _{\Omega} v \cdot \nu \, d\|\nabla \varphi (\cdot,t)\|dt=0
\end{split}
\end{equation*}
for any $\zeta \in C_c ^1 ((0,T))$. Hence we obtain (f).
\end{proof}

\begin{proposition}\label{prvelo2}
Let all the assumptions of Theorem \ref{mainresults} hold and a family of Radon measures $\{\mu _t\} _{t\in [0,\infty)}$ satisfy $\mu ^\varepsilon _t \to \mu _t$ as Radon measures for any $t \in [0,\infty)$. Then there exists a subsequence $\varepsilon \to 0$ such that \eqref{velo2} holds. 
\end{proposition}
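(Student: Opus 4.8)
The plan is to localize the global energy identity \eqref{deri2} by testing against $\eta$ and then to pass to the limit using Theorem \ref{mr}. Fix $\eta\in C_c^1(\mathbb{R}^d\times(0,T))$ and write $e_\varepsilon:=\frac{\varepsilon|\nabla\varphi^\varepsilon|^2}{2}+\frac{W(\varphi^\varepsilon)}{\varepsilon}$, so that $\mu_t^\varepsilon(\eta)=\frac1\sigma\int_\Omega\eta e_\varepsilon\,dx$. First I would differentiate in $t$, integrate by parts in $x$ (no boundary terms, by periodicity), substitute $-\varepsilon\Delta\varphi^\varepsilon+W'(\varphi^\varepsilon)/\varepsilon=-\varepsilon\varphi_t^\varepsilon+\lambda^\varepsilon\sqrt{2W(\varphi^\varepsilon)}$ from \eqref{ac}, and use $-\varphi_t^\varepsilon\nabla\varphi^\varepsilon=|\nabla\varphi^\varepsilon|^2v^\varepsilon$ to obtain
\begin{equation*}
\begin{split}
\frac{d}{dt}\mu_t^\varepsilon(\eta)
&=\frac1\sigma\int_\Omega\eta_t\,e_\varepsilon\,dx
+\frac1\sigma\int_\Omega\varepsilon|\nabla\varphi^\varepsilon|^2\,\nabla\eta\cdot v^\varepsilon\,dx\\
&\quad-\frac1\sigma\int_\Omega\eta\,\varepsilon(\varphi_t^\varepsilon)^2\,dx
+\frac{\lambda^\varepsilon}{\sigma}\int_\Omega\eta\,\sqrt{2W(\varphi^\varepsilon)}\,\varphi_t^\varepsilon\,dx.
\end{split}
\end{equation*}
Integrating over $t\in(0,T)$ annihilates the left-hand side, since $\eta$ has compact support in time, leaving an exact $\varepsilon$-level identity whose first two terms are kinematic and whose last two are the dissipation and reaction contributions.

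Next I would bound the dissipation and reaction terms by $C\|\eta\|_{C^0(\mathbb{R}^d\times(0,T))}$ uniformly in $\varepsilon$. The dissipation term is at once $\le\|\eta\|_{C^0}\,\frac1\sigma\int_0^T\int_\Omega\varepsilon(\varphi_t^\varepsilon)^2\,dx\,dt\le\|\eta\|_{C^0}D_1$ by \eqref{bound2}. For the reaction term I would use Cauchy--Schwarz in space,
\begin{equation*}
\int_\Omega\sqrt{2W(\varphi^\varepsilon)}\,|\varphi_t^\varepsilon|\,dx
\le\Big(\int_\Omega\frac{2W(\varphi^\varepsilon)}{\varepsilon}\,dx\Big)^{1/2}
\Big(\int_\Omega\varepsilon(\varphi_t^\varepsilon)^2\,dx\Big)^{1/2}
\le(2\sigma D_1)^{1/2}\Big(\int_\Omega\varepsilon(\varphi_t^\varepsilon)^2\,dx\Big)^{1/2},
\end{equation*}
where $\int_\Omega 2W(\varphi^\varepsilon)/\varepsilon\,dx\le2\sigma\mu_t^\varepsilon(\Omega)\le2\sigma D_1$, followed by Cauchy--Schwarz in time together with \eqref{L2est} and \eqref{bound2}, which gives $\int_0^T|\lambda^\varepsilon|(\int_\Omega\varepsilon(\varphi_t^\varepsilon)^2\,dx)^{1/2}\,dt\le(c_1(1+T))^{1/2}(\sigma D_1)^{1/2}$. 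Hence the right-hand side of the identity is $\le C\|\eta\|_{C^0}$ with $C=C(\sigma,D_1,c_1,T)$.

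Finally I would pass to the limit $\varepsilon\to0$ along the subsequence furnished by Theorem \ref{mr}. The term $\frac1\sigma\int_0^T\int_\Omega\eta_t\,e_\varepsilon\,dx\,dt=\int\eta_t\,d\mu^\varepsilon$ converges to $\int\eta_t\,d\mu=\int_0^T\int\eta_t\,d\mu_t\,dt$ by (a1) of Theorem \ref{mainresults}. For the remaining kinematic term I would rewrite $\frac1\sigma\int\varepsilon|\nabla\varphi^\varepsilon|^2\nabla\eta\cdot v^\varepsilon\,dx=\int\nabla\eta\cdot v^\varepsilon\,d\mu_t^\varepsilon+\frac1\sigma\int\xi_\varepsilon\,\nabla\eta\cdot v^\varepsilon\,dx$, since $\varepsilon|\nabla\varphi^\varepsilon|^2=e_\varepsilon+\xi_\varepsilon$ with $\xi_\varepsilon=\frac{\varepsilon|\nabla\varphi^\varepsilon|^2}{2}-\frac{W(\varphi^\varepsilon)}{\varepsilon}$; Theorem \ref{mr}(3), applied with $\Phi=\nabla\eta$, makes the first piece converge to $\int_0^T\int\nabla\eta\cdot v\,d\mu_t\,dt$. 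Taking limits in the identity and invoking the uniform bound from the previous step then yields \eqref{velo2}, with $v=h+g$.

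The main obstacle is exactly the discrepancy correction $\frac1\sigma\int_0^T\int\xi_\varepsilon\,\nabla\eta\cdot v^\varepsilon\,dx\,dt$: because $v^\varepsilon$ carries the factor $1/|\nabla\varphi^\varepsilon|$, this term cannot be controlled by the energy estimates alone, and one genuinely needs the equipartition of energy, namely that $|\xi_\varepsilon|$ vanishes in the weighted sense that forces this term to zero. This is precisely where the sign $\xi_\varepsilon\le0$ from Proposition \ref{negative} and the $L^2$ generalized-mean-curvature bound from Proposition \ref{proph} and Remark \ref{remmc} enter, and it is the content already secured within the Mugnai--R\"oger framework underlying Theorem \ref{mr}; I would invoke that vanishing here rather than reprove it.
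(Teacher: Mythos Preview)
Your argument is correct and follows the same overall strategy as the paper: differentiate $\mu_t^\varepsilon(\eta)$, integrate by parts, bound the non-kinematic remainder by $C\|\eta\|_{C^0}$ uniformly in $\varepsilon$, and pass to the limit in the kinematic piece. The paper's execution differs in two tactical points. First, rather than your split into dissipation $-\varepsilon(\varphi_t^\varepsilon)^2$ and reaction $\lambda^\varepsilon\sqrt{2W(\varphi^\varepsilon)}\varphi_t^\varepsilon$ (bounded via \eqref{bound2} and \eqref{L2est}), the paper keeps the term $\varepsilon\eta\big(-\Delta\varphi^\varepsilon+W'(\varphi^\varepsilon)/\varepsilon^2\big)\varphi_t^\varepsilon$ intact and bounds it by Cauchy--Schwarz using Proposition~\ref{proph}; both routes yield the same $C\|\eta\|_{C^0}$. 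Second, and more to the point of the obstacle you flag, the paper never splits off a discrepancy correction at all: it writes the kinematic term directly as $\int\nabla\eta\cdot v^\varepsilon\,d\tilde\mu_t^\varepsilon$ with $d\tilde\mu_t^\varepsilon:=\frac{\varepsilon}{\sigma}|\nabla\varphi^\varepsilon|^2\,dx$, notes that $\tilde\mu_t^\varepsilon\to\mu_t$ since $\xi_t^\varepsilon\to 0$ by R\"oger--Sch\"atzle, records the clean $L^2$ bound $\int|v^\varepsilon|^2\,d\tilde\mu^\varepsilon=\frac{1}{\sigma}\int\varepsilon(\varphi_t^\varepsilon)^2\le D_1$, and then applies Hutchinson's compactness theorem to obtain $\int\Phi\cdot v^\varepsilon\,d\tilde\mu^\varepsilon\to\int\Phi\cdot v\,d\mu$. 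Your detour through $d\mu_t^\varepsilon$ plus a discrepancy remainder works too, since the convergence in Theorem~\ref{mr}(3) already encodes the needed equipartition, but the choice of $\tilde\mu^\varepsilon$ as the reference measure is what lets the paper avoid confronting the awkward $\int\xi_\varepsilon\, v^\varepsilon\cdot\nabla\eta$ term altogether.
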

\begin{proof}
For any $\eta \in C_c ^1 (\mathbb{R}^d\times (0,T))$ we compute that
\begin{equation}
\begin{split}
&\frac{d}{dt}\int _{\mathbb{R}^d} \eta \, d\mu _t ^\varepsilon
=\int _{\mathbb{R}^d} \eta _t \, d\mu _t ^\varepsilon +\frac{1}{\sigma}\int  _{\mathbb{R}^d} \eta \Big( \varepsilon \nabla \varphi^\varepsilon \cdot \nabla \varphi _t ^\varepsilon +\frac{W' (\varphi^\varepsilon )}{\varepsilon}\varphi _t ^\varepsilon \Big) \, dx \\
=& \int _{\mathbb{R}^d} \eta _t \, d\mu _t ^\varepsilon + \frac{1}{\sigma}\int_{\mathbb{R}^d }  \varepsilon \eta \Big( -\Delta \varphi^\varepsilon +\frac{W' (\varphi ^\varepsilon )}{\varepsilon ^2} \Big)\varphi^\varepsilon _t \, dx
-\frac{1}{\sigma}\int_{\mathbb{R}^d }  \varepsilon (\nabla \eta \cdot \nabla \varphi^\varepsilon )\varphi^\varepsilon _t \, dx\\
=& \int _{\mathbb{R}^d} \eta _t \, d\mu _t ^\varepsilon + \frac{1}{\sigma}\int_{\mathbb{R}^d }  \varepsilon \eta \Big( -\Delta \varphi^\varepsilon +\frac{W' (\varphi ^\varepsilon )}{\varepsilon ^2} \Big)\varphi^\varepsilon _t \, dx
+\int_{\mathbb{R}^d } \nabla \eta \cdot v^\varepsilon \, d\tilde{\mu}_t ^\varepsilon,
\end{split}
\label{deri3}
\end{equation}
where $d\tilde{\mu}_t ^\varepsilon:=\frac{\varepsilon}{\sigma} |\nabla\varphi^\varepsilon|^2 dx$. By \eqref{bound2}, \eqref{bound3} and \eqref{deri3}, there exists $C>0$ such that
\begin{equation}
\begin{split}
& \Big|\int _0 ^T \Big(\int_{\mathbb{R}^d} \eta_t \, d\mu _t ^\varepsilon + \int_{\mathbb{R}^d } \nabla \eta \cdot v^\varepsilon \, d\tilde{\mu}_t ^\varepsilon \Big)dt\Big|\\
\leq & \|\eta\|_{C^0(\mathbb{R}^d\times(0,T))} \Big\{ \mu_0 (\spt \eta) +\mu _T (\spt \eta)\\
& + \frac{1}{\sigma} \Big( \int _0 ^T \int _{\spt \eta} \varepsilon \Big( \Delta \varphi ^{\varepsilon} -\frac{W' (\varphi ^{\varepsilon})}{\varepsilon ^2} \Big)^2 \, dx dt \Big)^\frac{1}{2} \Big( \int _0 ^T \int _{\spt \eta} \varepsilon (\varphi^\varepsilon )^2 \, dx dt \Big)^\frac{1}{2} \Big\}\\
\leq & C\|\eta\|_{C^0(\mathbb{R}^d\times(0,T))}.
\end{split}
\label{deri4}
\end{equation}
Note that by \eqref{bound2}, \eqref{bound3} and \cite[Proposition 4.9]{roger-schatzle} there exists a subsequence $\varepsilon \to 0$ such that $\xi ^\varepsilon _t \to 0$ as Radon measures for a.e. $t \in [0,\infty)$. Hence $\tilde{\mu} ^\varepsilon_t \to \mu_t $ as Radon measures for a.e. $t \in [0,\infty)$. By \eqref{bound2}, we have
\[ \sup _{\varepsilon \in (0,\epsilon_1 )} \int _0 ^T \int_{\mathbb{R}^d } | v^\varepsilon|^2 \, d\tilde{\mu}_t ^\varepsilon dt =\sup _{\in (0,\epsilon_1 )} \int _0 ^T \int_{\mathbb{R}^d } \varepsilon | \varphi ^\varepsilon _t|^2 \, dx dt \leq \sigma D_1. \]
Hence there exist $v\in L^2 (\mu;\mathbb{R}^d)$ and a subsequence $\varepsilon \to 0$ such that
\begin{equation}
\lim _{\varepsilon \to 0} \int _0 ^T \int_{\mathbb{R}^d } \Phi \cdot v^\varepsilon \, d\tilde{\mu}_t ^\varepsilon dt
= \int _0 ^T \int_{\mathbb{R}^d } \Phi \cdot v \, d\mu
\label{pair}
\end{equation}
for any $\Phi \in C_c (\mathbb{R}^d \times (0,\infty) ;\mathbb{R}^d)$ (see \cite[Theorem 4.4.2]{hutchinson}). Hence by \eqref{deri4} and  \eqref{pair} we obtain \eqref{velo2}.
\end{proof}

\end{document}